\newcommand{\B}{\mathbf{B}}
\newcommand{\CC}{\mathbb{C}}
\newcommand{\ZZ}{\mathbb{Z}}
\newcommand{\PP}{\mathbb{P}}
\newcommand{\QQ}{\mathbb{Q}}
\newcommand{\Sym}{\mathfrak{S}}
\newcommand{\EEE}{\mathscr{E}}
\newcommand{\operatorname}[1]{\mathrm{#1}}
\newcommand{\Sing}{\operatorname{Sing}}
\newcommand{\Bir}{\operatorname{Bir}}
\newcommand{\Pic}{\operatorname{Pic}}
\newcommand{\Aut}{\operatorname{Aut}}
\newcommand{\Bs}{\operatorname{Bs}}
\newcommand{\Cr}{\operatorname{Cr}}
\newcommand{\rr}{\operatorname{r}}
\newcommand{\Fix}{\operatorname{Fix}}
\newcommand{\g}{\operatorname{g}}
\newcommand{\qq}{\mathbin{\sim_{\scriptscriptstyle{\QQ}}}}
\newcommand{\PGL}{PGL}
\newcommand{\GL}{GL}
\newcommand{\comp}{\mathbin{\scriptstyle{\circ}}}
\renewcommand\labelenumi{\textup{(\roman{enumi})}}
\newtheorem{ccorollary}[theorem]{Corollary}
\newtheorem{llemma}[theorem]{Lemma}
\newtheorem{pproposition}[theorem]{Proposition}
\begin{document}
\title*{$2$-elementary subgroups of the space Cremona group}

\author{Yuri Prokhorov \thanks{
 I acknowledge partial supports by
RFBR grants No. 11-01-00336-a,
 the grant of
 Leading Scientific Schools No. 4713.2010.1, 
 Simons-IUM fellowship,
 and 
 AG Laboratory SU-HSE, RF government 
 grant ag. 11.G34.31.0023.
 }}

\institute{
Steklov Mathematical Institute, 
8 Gubkina str., Moscow 119991
\and
Laboratory of Algebraic Geometry, SU-HSE, 
7 Vavilova str., Moscow 117312 \email{prokhoro@gmail.com}
}

\maketitle
\abstract{
We give a sharp bound for orders of elementary abelian 2-groups
of birational automorphisms of rationally connected threefolds.
}

\section{Introduction}\label{section-Introduction}
Throughout this paper we work over $\Bbbk$, an algebraically closed field of
characteristic $0$.
Recall that the \textit{Cremona group} $\Cr_n(\Bbbk)$ is the group of birational
transformations of the projective space $\PP^n_\Bbbk$. 
We are interested in finite subgroups of $\Cr_n(\Bbbk)$.
For $n=2$ these subgroups
are classified basically
(see \cite{Dolgachev-Iskovskikh} and references therein)
but for $n\ge 3$ 
the situation
becomes much more complicated.
There are only a few, very specific classification results (see e.g.
\cite{Prokhorov2009e}, \cite{Prokhorov2011a}, 
\cite{Prokhorov-2013-im}).

Let $p$ be a prime number.
A group $G$ is said to be \textit{$p$-elementary abelian} of rank $r$ 
if $G\simeq (\ZZ/p\ZZ)^r$.
In this case we denote $\rr(G):=r$.
A.~Beauville \cite{Beauville2007} obtained a sharp bound 
for the rank of $p$-elementary abelian subgroups of $\Cr_2(\Bbbk)$.
\begin{theorem}[{\cite{Beauville2007}}]
\label{theorem-cr2}
Let $G\subset \Cr_2(\Bbbk)$ be a $2$-elementary abelian subgroup.
Then $\rr(G) \le 4$.
Moreover, 
this bound is sharp and such groups $G$ with $\rr(G) = 4$ are classified up
to conjugacy in $\Cr_2(\Bbbk)$.
\end{theorem}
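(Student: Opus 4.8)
The plan is to run a $G$-equivariant minimal model program for surfaces. Since $\Cr_2(\Bbbk)=\Bir(\PP^2)$, a standard regularization argument lets me realize the given finite $G\subset\Cr_2(\Bbbk)$ as a group of biregular automorphisms of some smooth projective rational surface, and after a $G$-equivariant MMP I may assume that this surface $S$ is a $G$-Mori fibre space: either (a) $\rho(S)^G=1$ and $-K_S$ is ample (the del Pezzo case), or (b) there is a $G$-equivariant conic bundle $\phi\colon S\to\PP^1$ with $\rho(S)^G=2$. Because conjugacy classes of finite subgroups of $\Cr_2(\Bbbk)$ correspond to such models modulo elementary $G$-links, it is enough to bound $\rr(G)$ in each case and to pin down the models realizing $\rr(G)=4$.

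\textbf{Conic bundles.} Restricting to the base and to the generic fibre gives an exact sequence $1\to G_{\mathrm f}\to G\to G_{\mathrm b}\to1$, where $G_{\mathrm b}\subset\Aut(\PP^1)=\PGL_2(\Bbbk)$ and $G_{\mathrm f}$ acts faithfully on the generic fibre, which is a conic over $\Bbbk(\PP^1)$ and hence a form of $\PP^1$. A finite $2$-elementary abelian subgroup of $\PGL_2$ over any field is cyclic or $(\ZZ/2)^2$, so $\rr(G_{\mathrm b})\le2$ and $\rr(G_{\mathrm f})\le2$, whence $\rr(G)\le4$. For equality one needs $G_{\mathrm b}\cong G_{\mathrm f}\cong(\ZZ/2)^2$; since a $2$-elementary group of automorphisms of $\PP^1$ fixing a point is at most $\ZZ/2$, a fibrewise $(\ZZ/2)^2$ forces $\phi$ to have no singular fibre and rules out every $\mathbb{F}_n$ with $n>0$, so $S=\PP^1\times\PP^1$ and $G$ is conjugate to the standard group $(\ZZ/2)^2\times(\ZZ/2)^2\subset\PGL_2\times\PGL_2$.

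\textbf{Del Pezzo surfaces.} Write $d=K_S^2$. For $d\ge5$ one reads off $\rr(G)\le3$ directly ($d=9$: $\Aut(S)=\PGL_3$; $d=8$: $S=\PP^1\times\PP^1$ with $G$ interchanging the two rulings; $d=5$: $\Aut(S)=S_5$; and $d=6,7$ do not occur, the surface not being $G$-minimal for a $2$-group). For $d=4$, $\Aut(S)$ preserves the pencil of quadrics defining $S\subset\PP^4$, hence has the shape $(\ZZ/2)^4\rtimes\Gamma$ with $\Gamma$ a finite subgroup of $\PGL_2$ acting nontrivially on the normal factor, so its maximal $2$-elementary abelian subgroup is exactly the group of sign changes, of rank $4$. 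For $1\le d\le3$ I would use the cubic model in $\PP^3$ ($d=3$) and, after dividing by the central Geiser, resp.\ Bertini, involution $\iota$, the inclusions $\Aut(S)/\langle\iota\rangle\hookrightarrow\PGL_3$ ($d=2$) and $\Aut(S)/\langle\iota\rangle\hookrightarrow\Aut(\PP(1,1,2))$ ($d=1$); together with the fact that the would-be extra generator is obstructed on a \emph{smooth} $S$ (no smooth cubic is stabilized by a $2$-elementary abelian group of order $8$ in $\PGL_4$; for $d=1$ distinct ``$\PGL_2$-type'' involutions fail to commute on $S$ because of the $w\mapsto -w$ rescaling ambiguity), this yields $\rr(G)\le3$ for $d\le3$. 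Thus $\rr(G)=4$ forces $d=4$; since a rank-$4$ $2$-elementary abelian subgroup of $W(D_5)$ acts on the root lattice without invariants, the action of $G$ on $\Pic(S)$, hence on the $16$ lines, is pinned down, which identifies $S$ with $\{\sum a_ix_i^2=\sum b_ix_i^2=0\}\subset\PP^4$ and $G$ with its sign changes $x_i\mapsto\pm x_i$.

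\textbf{Sharpness and classification.} For generic parameters the pencil above defines a smooth del Pezzo surface of degree $4$ carrying a faithful $(\ZZ/2)^4$-action, and $(\ZZ/2)^2\times(\ZZ/2)^2$ acts faithfully on $\PP^1\times\PP^1$, so the bound $\rr(G)\le4$ is sharp. To finish I must show these constructions exhaust the rank-$4$ subgroups up to conjugacy and describe the conjugacy classes; the relevant invariant is the fixed locus, since every nontrivial element of the $\PP^1\times\PP^1$-group fixes a rational curve while a sign-change involution of the quartic del Pezzo fixes a curve of genus $1$ (an intersection of two conics in $\PP^3$), so the two families are not conjugate in $\Cr_2(\Bbbk)$, and the moduli of these fixed elliptic curves refine the picture in degree $4$. \emph{The main obstacle} I anticipate is exactly this last part together with the equality analysis for conic bundles: checking that a rank-$4$ conic-bundle action really descends, after $G$-links, to the standard action on $\PP^1\times\PP^1$, and that no further rank-$4$ Mori fibre space survives, will require careful bookkeeping of singular fibres, of the configuration of $(-1)$-curves, and of the $G$-Sarkisov links between the competing models.
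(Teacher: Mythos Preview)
The paper does not contain a proof of this statement. Theorem~\ref{theorem-cr2} is quoted from Beauville's paper \cite{Beauville2007} and used as a black box throughout (e.g.\ in Proposition~\ref{propositionFano-Mori-fibration}, Corollary~\ref{corollary-hyp-sect-0}, and Proposition~\ref{lemma-hyp-sect}); the only related argument in the paper is Lemma~\ref{lemma-Beauville-del-Pezzo}, whose proof is again just ``Similar to \cite[\S 3]{Beauville2007}''. So there is no ``paper's own proof'' to compare your proposal against.

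That said, your outline is essentially Beauville's strategy, and the bound $\rr(G)\le 4$ via the $G$-MMP dichotomy is correct. Two places in your write-up are genuinely incomplete, though. In the conic-bundle case, the sentence ``a fibrewise $(\ZZ/2)^2$ forces $\phi$ to have no singular fibre'' is not justified by the preceding remark about stabilizers of points on $\PP^1$: a singular fibre is a pair of lines meeting at a node, and $(\ZZ/2)^2$ can act faithfully on such a curve while fixing the node (one $\ZZ/2$ on each component). Ruling out singular fibres in the rank-$4$ situation really uses $G$-minimality and an analysis of how $G$ permutes the components of all singular fibres; you acknowledge this at the end, but the earlier paragraph asserts the conclusion prematurely. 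In the del~Pezzo case $d\le 3$, the parenthetical claims (``no smooth cubic is stabilized by a $2$-elementary abelian group of order $8$ in $\PGL_4$''; the commutation obstruction for $d=1$) are the actual content and would need to be argued, not stated.
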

The author \cite{Prokhorov2011a} was able to get a similar bound for $p$-elementary abelian subgroups of $\Cr_3(\Bbbk)$
which is sharp for $p\ge 17$.

In this paper we improve this result in the case $p=2$.
We study $2$-elementary abelian subgroups
acting on rationally connected threefolds. In particular,
we obtain a \emph{sharp} bound for the rank of such subgroups in 
$\Cr_3(\Bbbk)$.
Our main result is the following.

\begin{theorem}\label{main}
Let $Y$ be a rationally connected variety over $\Bbbk$ and 
let $G\subset\Bir_\Bbbk(Y)$ be a $2$-elementary abelian group.
Then $\rr(G)\le 6$.
\end{theorem}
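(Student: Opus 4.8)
The plan is to reduce to a minimal model problem using the $G$-equivariant Minimal Model Program. Given the pair $(Y,G)$ with $Y$ rationally connected, I would first pass to a smooth projective model and run a $G$-MMP; since $Y$ is rationally connected the output is a $G$-Mori fibre space $X\to S$, i.e. a $G\mathbb{Q}$-factorial terminal variety $X$ with $-K_X$ ample over a base $S$ of smaller dimension, with $\rho^G(X/S)=1$. So $\dim S\in\{0,1,2\}$, and $G$ embeds in $\operatorname{Bir}_\Bbbk(X)=\operatorname{Bir}_\Bbbk(Y)$. The bound $\rr(G)\le 6$ must then be checked case by case according to $\dim S$.

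**Key steps.** For $\dim S=2$: the generic fibre is a conic, so there is an exact sequence $1\to G_\eta\to G\to G_S\to 1$ where $G_\eta$ acts on the generic conic (a form of $\PP^1$) hence $\rr(G_\eta)\le 2$, and $G_S$ acts birationally on the rational surface $S$, so $\rr(G_S)\le 4$ by Theorem~\ref{theorem-cr2}; this already gives $\rr(G)\le 6$. For $\dim S=1$: here $S=\PP^1$ and the generic fibre is a del Pezzo surface over $\Bbbk(S)$; one uses the classification of $G$-equivariant del Pezzo fibrations, bounding $\rr(G)$ on the fibre by the del Pezzo degree (the worst case being degree $1$ or $2$, giving rank roughly $4$ on the fibre) plus at most $1$ coming from the action on the base $\PP^1$, staying within $6$. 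The substantial case is $\dim S=0$, where $X$ is a $G\mathbb{Q}$-factorial terminal Fano threefold with $\rho^G=1$. Here one must analyse the possible Fano threefolds and their automorphism groups: this splits into the subcase where $X$ is smooth (use the classification of Fano threefolds and their $2$-elementary subgroups of automorphisms — the maximal rank $6$ is attained, e.g. on $\PP^3$ by diagonal involutions together with a Cremona involution, or on products/quadrics) and the subcase where $X$ is singular, where one studies the action on the (few) singular points and on a suitable resolution, or links $X$ to a smoother model via Sarkisov-type birational maps.

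**Main obstacle.** The hard part is the $\dim S=0$ case with $X$ singular: one cannot simply quote a classification, so the argument requires controlling $\rr(G)$ through the geometry of the $G$-invariant set of singularities, the ramification of the anticanonical or other natural linear systems, and possibly equivariant birational modifications reducing to the smooth Fano or lower-dimensional case. A secondary subtlety throughout is that after the MMP the variety is only $G\mathbb{Q}$-factorial and terminal rather than smooth, so standard bounds on $\rr(\Aut)$ of smooth Fanos do not apply directly and one needs versions that tolerate terminal singularities, or one must show the extremal cases forcing $\rr(G)=6$ occur only for smooth $X$. Finally, to see the bound is attained (so that it is genuinely sharp, as claimed in the introduction) one exhibits an explicit $(\ZZ/2\ZZ)^6\subset\Cr_3(\Bbbk)$, most naturally as diagonal involutions on $\PP^3$ extended by a standard quadratic (Geiser/Bertini-type) involution.
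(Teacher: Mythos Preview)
Your overall reduction via $G$-equivariant MMP to a $G$-Fano--Mori fibration, followed by a case split on $\dim S$, is exactly what the paper does, and your treatment of the fibration cases is essentially right (though the bound on a $2$-elementary subgroup of $\Aut(\PP^1)$ is $2$, not $1$: the Klein four-group acts; this does not damage the inequality $\rr(G)\le 6$ but it does matter for identifying the extremal cases, which in fact arise from del Pezzo fibrations with $4+2=6$).

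The genuine gap is in the Fano case $\dim S=0$, which occupies the bulk of the paper (Sections~4--6). First, a factual error: after the MMP the group $G$ acts \emph{biregularly} on $X$, so if $X\simeq\PP^3$ then $G\subset\PGL_4(\Bbbk)$ and one shows $\rr(G)\le 4$ (Lemma~\ref{P3}); you cannot adjoin a Cremona involution at this stage, and rank $6$ is not attained on $\PP^3$ in this sense. More seriously, the paper does not split the Fano case into smooth versus singular, but into \emph{non-Gorenstein} versus \emph{Gorenstein}, and each half requires a specific tool you do not mention. For non-Gorenstein $X$ the argument rests on the Kawamata--Koll\'ar--Miyaoka--Mori--Takagi inequality $\sum_{P\in\B(X)}(m_P-1/m_P)<24$ together with Reid's orbifold Riemann--Roch and a local bound on $\rr$ of stabilisers at terminal points (Lemma~\ref{lemma-fixed-point}); this numerical inequality is what forces $\rr(G)\le 6$. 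For Gorenstein $X$ the key input is the existence of a $G$-invariant member $S\in|{-}K_X|$, the analysis of whether $(X,S)$ is lc and whether $S$ is a (Du Val) K3 surface, and Nikulin's bound $\rr\le 4$ for $2$-elementary symplectic groups on K3 surfaces (Lemma~\ref{lemma-K3}); this is combined with a fairly long case analysis over the Fano index, the Picard rank, and whether $-K_X$ is very ample. Your suggestions of passing to a resolution or running Sarkisov links do not appear and would not obviously yield the bound. Finally, the paper's sharpness example (Example~\ref{example-Cr}) is $F\times\PP^1$ with $F$ a degree-$4$ del Pezzo surface, not a construction on $\PP^3$.
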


\begin{ccorollary}\label{main-Cremona}
Let $G\subset\Cr_3(\Bbbk)$ be a $2$-elementary abelian group.
Then $\rr(G)\le 6$ and the bound is sharp \textup(see Example \textup{\ref{example-Cr}}\textup).
\end{ccorollary}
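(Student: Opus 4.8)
The plan is to obtain the inequality for free from Theorem~\ref{main} and to concentrate all real work on producing a subgroup that attains the bound, which is the content of Example~\ref{example-Cr}. For the inequality: the Cremona group $\Cr_3(\Bbbk)$ is by definition $\Bir_\Bbbk(\PP^3)$, and $\PP^3$ is rational, hence rationally connected, so Theorem~\ref{main} applied with $Y=\PP^3$ immediately yields $\rr(G)\le 6$ for every $2$-elementary abelian $G\subset\Cr_3(\Bbbk)$.

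For sharpness I would pass to a convenient birational model of $\PP^3$, namely $X:=\PP^1\times\PP^1\times\PP^1$; since $X$ is rational, any birational map between $X$ and $\PP^3$ induces, by conjugation, a group isomorphism $\Bir_\Bbbk(X)\simeq\Bir_\Bbbk(\PP^3)=\Cr_3(\Bbbk)$, so it is enough to exhibit a $2$-elementary abelian subgroup of rank $6$ inside $\Aut(X)$. In $\PGL_2(\Bbbk)=\Aut(\PP^1)$ I would take the subgroup $V$ generated by the two involutions $z\mapsto -z$ and $z\mapsto 1/z$; a one-line check shows they commute in $\PGL_2(\Bbbk)$ and are independent, so $V\simeq(\ZZ/2\ZZ)^2$. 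Letting the three factors of $V\times V\times V$ act on the three factors of $X$ gives a faithful action of $(\ZZ/2\ZZ)^6$ on $X$, hence the desired rank-$6$ subgroup of $\Cr_3(\Bbbk)$.

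I do not expect any genuine obstacle here: all the difficulty lies in Theorem~\ref{main}, and the corollary is a formal consequence together with the elementary construction above. The only verifications needed are routine — that the three copies of $V$ commute (immediate, as they act on disjoint coordinates), that $V$ really has rank $2$ and not $1$ in $\PGL_2(\Bbbk)$, and that the resulting homomorphism $(\ZZ/2\ZZ)^6\to\Cr_3(\Bbbk)$ is injective, which follows from faithfulness of the action on $X$ together with the fact that conjugation by a birational equivalence is an isomorphism of groups of birational self-maps.
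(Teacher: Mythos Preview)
Your argument is correct. The inequality $\rr(G)\le 6$ is, as you say, an immediate specialisation of Theorem~\ref{main} to $Y=\PP^3$, and this is exactly how the paper treats it (implicitly).

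For sharpness you take a genuinely different route from the paper. The paper's Example~\ref{example-Cr} builds the rank-$6$ group as $G_f\times G_B$ acting on $X=F\times\PP^1$, where $F\subset\PP^4$ is a smooth quartic del Pezzo surface carrying the $2$-elementary group $G_f\simeq(\ZZ/2\ZZ)^4$ of sign changes, and $G_B\simeq(\ZZ/2\ZZ)^2$ is the image of the quaternion group in $\PGL_2(\Bbbk)$. Your example instead uses $X=(\PP^1)^3$ with the product $V\times V\times V$ of three copies of the Klein four-group in $\PGL_2(\Bbbk)$. Both constructions are valid; yours is the more elementary, as it avoids any reference to del Pezzo surfaces and makes rationality of $X$ completely transparent. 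The paper's choice is motivated by the surrounding context: it simultaneously witnesses sharpness of the bound in Proposition~\ref{propositionFano-Mori-fibration} via the del Pezzo fibration $F\times\PP^1\to\PP^1$ with fibre of degree $4$. Your example does this too, but for the other case of Lemma~\ref{lemma-Beauville-del-Pezzo}: the projection $(\PP^1)^3\to\PP^1$ has fibre $\PP^1\times\PP^1$, a del Pezzo surface of degree $8$ with $\rho^{G_f}=2$. So the two examples are complementary illustrations of the two cases in that lemma.
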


Unfortunately we are not able to classify all the birational 
actions $G\hookrightarrow\Bir_\Bbbk(Y)$ as above 
attaining the bound $\rr(G)\le 6$ (cf. \cite{Beauville2007}). However, in some cases we
get a description of these ``extremal'' actions.

The structure of the paper is as follows.
In Sect. \ref{section-MMP} we reduce the problem to 
the study of biregular actions of $2$-elementary 
abelian groups on Fano-Mori fiber spaces and investigate 
the case of non-trivial base.
A few facts about actions of $2$-elementary 
abelian groups on Fano threefolds are discussed in
Sect. \ref{section-Fano-threefolds}.
In Sect. \ref{section-Non-Gorenstein-Fano} (resp. 
\S \ref{section-Gorenstein-Fano}) we study actions on
non-Gorenstein (resp. Gorenstein) Fano threefolds. 
Our main theorem is a direct consequence of 
Propositions \ref{propositionFano-Mori-fibration},
 \ref{proposition-non-Gorenstein}, and \ref{proposition-Gorenstein}.
 
\par\medskip\noindent
{\bf Acknowledgments.}
The work was completed during the author's stay at the
International Centre for Theoretical Physics, Trieste.
The author would like to thank ICTP for hospitality and support.

\section{Preliminaries}\label{section-Preliminaries}
\setcounter{theorem}{0}
\noindent{\bf Notation.}
\begin{itemize}
 \item 
For a group $G$, $\rr(G)$ denotes the minimal number of generators.
In particular, if $G$ is an elementary abelian $p$-group, then 
$G\simeq (\ZZ/p\ZZ)^{r(G)}$.
 \item 
$\Fix(G,X)$ (or simply $\Fix(G)$ if no confusion is likely) denotes the fixed point locus of an action 
of $G$ on $X$.
\end{itemize}

\noindent{\bf Terminal singularities.}
Recall that the \textit{index} of a terminal singularity $(X\ni P)$ is a minimal 
positive integer $r$ such that $K_X$ is a Cartier divisor at $P$.

\begin{llemma}
\label{lemma-fixed-point}
Let $(X\ni P)$ be a germ of a threefold terminal singularity 
and let $G\subset \Aut(X\ni P)$ be a $2$-elementary abelian subgroup. 
Then $\rr(G)\le 4$.
Moreover, if $\rr(G)= 4$, then $(X\ni P)$ is not a cyclic quotient singularity.
\end{llemma}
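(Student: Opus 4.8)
The plan is to pass to the index-one cover and reduce everything to a statement about finite $2$-subgroups of $\GL_n(\CC)$ with $n\le 4$, together with one rank inequality for such subgroups. First I would note that, $G$ being abelian and fixing $P$, it acts faithfully on $\mathfrak m_P/\mathfrak m_P^2$: if $g\in G$ acts trivially there, then after an equivariant embedding $(X\ni P)\hookrightarrow(\CC^N,0)$ with linear $G$-action one gets $g=\mathrm{id}$ on $\CC^N$, hence on $X$. Thus $\rr(G)\le\dim\mathfrak m_P/\mathfrak m_P^2$, which already settles the Gorenstein case: then $(X\ni P)$ is compound Du Val, hence a hypersurface singularity, so $\dim\mathfrak m_P/\mathfrak m_P^2\le4$. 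In general I would take the index-one cover $\pi\colon(X^\sharp\ni P^\sharp)\to(X\ni P)$ --- a cyclic $\mu_r$-cover ($r=$ index) with $X^\sharp$ a Gorenstein terminal, hence cDV, threefold singularity, which is \emph{smooth} exactly when $(X\ni P)$ is a cyclic quotient singularity and a hypersurface $\{\phi=0\}\subset(\CC^4,0)$ otherwise. Since the index-one cover is canonically attached to $(X\ni P)$, the $G$-action lifts to $\tilde G\subset\Aut(X^\sharp\ni P^\sharp)$ with
\[1\longrightarrow\mu_r\longrightarrow\tilde G\longrightarrow G\longrightarrow1,\qquad|\tilde G|=r\,|G|,\]
and, $X^\sharp$ being smooth or a hypersurface, $\tilde G$ acts linearly and faithfully on $T_{P^\sharp}X^\sharp$: either $\tilde G\subset\GL_3(\CC)$, or $\tilde G\subset\GL_4(\CC)$ with $\phi$ a $\tilde G$-semi-invariant. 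As $G$ is a $2$-group, a Sylow $2$-subgroup $P\subset\tilde G$ satisfies $\tilde G=P\mu_r$, so $G\cong P/(P\cap\mu_r)$ with $P\cap\mu_r$ cyclic; hence it suffices to bound $\rr(P)$, and then $\rr(G)\le\rr(P)$. I would also record the elementary fact that a finite $2$-subgroup of $\GL_n(\CC)$ has rank $\le n$ for $n\le3$ (reduce to the irreducible case: $n=1$ gives a cyclic group, for $n=2$ the image in $\PGL_2(\CC)$ is cyclic or dihedral, and $n=3$ cannot occur) --- but this is false for $n=4$.

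If $X^\sharp$ is smooth, i.e.\ $(X\ni P)$ is a cyclic quotient singularity, then $P\subset\GL_3(\CC)$ is a finite $2$-group, so $\rr(P)\le3$ and $\rr(G)\le3<4$. In particular a cyclic quotient threefold terminal singularity carries no $2$-elementary abelian action of rank $4$, which is the ``moreover'' assertion.

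Now suppose $X^\sharp=\{\phi=0\}\subset(\CC^4,0)$ is genuinely singular. If $r$ is odd (in particular if $r=1$), the extension splits by Schur--Zassenhaus, so $G\hookrightarrow\GL_4(\CC)$; being $2$-elementary abelian it is diagonalisable and $\rr(G)\le4$. If $r\ge2$ is even, I would look at the central involution $\sigma^{r/2}\in\mu_r\cap Z(\tilde G)$: it is not scalar, for if it were it would equal $-\mathrm{id}$, forcing (after choosing $\phi$ semi-invariant) that $\phi$ has only even-degree terms and hence that $X^\sharp/\langle\sigma^{r/2}\rangle$ is Gorenstein, which contradicts the fact that $X^\sharp$ itself is the index-one cover of $X$. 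So $\CC^4=V_+\oplus V_-$ with $1\le\dim V_{\pm}\le3$, this decomposition is $P$-invariant, $P$ embeds into $\GL(V_+)\times\GL(V_-)$, and the $\GL_{\le3}$-rank bound gives $\rr(P)\le\dim V_++\dim V_-=4$. Hence $\rr(G)\le4$ in all cases, and the lemma follows.

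The step I expect to be the main obstacle is the analysis in the case $r\ge2$: there is no soft reason for $\rr(P)\le4$ when $P$ is merely a finite $2$-subgroup of $\GL_4(\CC)$ --- for example the central product $\ZZ/4\ast 2^{1+4}$ embeds irreducibly in $\GL_4(\CC)$ and has rank $5$ --- so one genuinely has to use the extra structure: that $P$ normalises the cyclic cover group $\mu_r$ and, crucially, the position of its central involution. One can alternatively replace this by a direct check through Reid's list of analytic normal forms $\mathrm{cA}/n,\mathrm{cAx}/2,\mathrm{cAx}/4,\mathrm{cD}/2,\mathrm{cE}/2$, at the cost of more case-work. Getting this pivot right --- and making sure the $P$-invariant decompositions really do have all summands of dimension $\le3$ --- is where the real work lies.
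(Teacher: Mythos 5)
Your overall strategy (index-one cover, Sylow $2$-subgroup of the lift, eigenspace splitting for the central involution) is genuinely different from the paper's, but it breaks at exactly the point you flag as ``where the real work lies''. The ``elementary fact'' that a finite $2$-subgroup of $\GL_n(\CC)$ has rank $\le n$ for $n\le 3$ is false: the central product $\ZZ/4\ast Q_8=\langle iI_2,Q_8\rangle\subset \GL_2(\CC)$ has order $16$, Frattini subgroup $\{\pm I_2\}$ and Frattini quotient $(\ZZ/2)^3$, so it needs $3$ generators; adding $\langle -1\rangle\subset \GL_1$ gives a $2$-subgroup of $\GL_3(\CC)$ of rank $4$. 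This is the same scalar-extension phenomenon you yourself point out in $\GL_4$ (via $\ZZ/4\ast 2^{1+4}$), just one dimension down; your sketched proof for $n=2$ fails because the image in $\PGL_2(\CC)$ being a Klein four group does not prevent the cyclic scalar part from raising the rank by one. Both places where you invoke the fact are essential: the ``moreover'' clause (cyclic quotient case, $P\subset\GL_3$, allegedly $\rr(P)\le 3$) and the concluding estimate $\rr(P)\le\dim V_++\dim V_-=4$ in the even-index hypersurface case (with the correct ranks one only gets $3+3$ or $4+1$). Nor can you escape by bounding only the elementary abelian quotient instead of $P$: the group $\tilde G=\langle iI_2,Q_8\rangle\times\langle-1\rangle\subset\GL_3(\CC)$ has a central cyclic subgroup $C=\{\pm(I_2,1)\}$ with $\tilde G/C\simeq(\ZZ/2)^4$, so the bare group-theoretic configuration ``subgroup of $\GL_3$ with a normal cyclic subgroup and $2$-elementary quotient'' does not yield the bound $3$.

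What is missing is precisely the geometric input that the paper exploits: by the Mori--Reid classification, the covering group $\mu_r$ acts on a distinguished three-dimensional subspace $T$ of the tangent space without eigenvalue $1$, i.e.\ freely outside the origin, and $T$ is invariant under the lift. This freeness is what kills configurations like the one above (in the paper: a nontrivial element of the derived subgroup of the lift would lie in $\mu_r$, hence act freely on $T$, forcing the $3$-dimensional representation to be irreducible and producing a transitive homomorphism from a $2$-group to $\Sym_3$, which is absurd); the paper then concludes that the lift is abelian and only afterwards uses the (true) rank bound for abelian subgroups, $\rr\le\dim T_{P^\sharp,X^\sharp}$. A repaired version of your argument would have to feed this eigenvalue information (or the explicit weights from Reid's list) into the analysis of the Sylow subgroup rather than a general rank bound for $2$-subgroups of small $\GL_n$. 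A smaller gap: your exclusion of $\sigma^{r/2}=-\operatorname{id}$ treats only $\phi(-x)=\phi(x)$; the anti-invariant case $\phi(-x)=-\phi(x)$ must also be ruled out (it can be, since a cDV point is a double point, so $\phi$ has a nonzero quadratic part), or one simply reads off from the classification that the central involution is always $\diag(-1,-1,-1,1)$.
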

\begin{proof}
Let $ m$ be the index of $(X\ni P)$.
Consider the index-one cover $\pi\colon (X^\sharp\ni P^\sharp)\to 
(X\ni P)$ (see \cite{Reid-YPG1987}). 
Here $(X^\sharp\ni P^\sharp)$ is a terminal 
point of index $1$ (or smooth) and $\pi$ is a cyclic cover of degree $ m$ which is \'etale outside of $P$.
Thus $X\ni P$ is the quotient of $X^\sharp\ni P$ by a cyclic group of order $m$. 
If $m=1$, we take $\pi$ to be the identity map.
We may assume that $\Bbbk=\CC$ and then the map $X^\sharp\setminus \{P^\sharp\}\to 
X\setminus \{P\}$ can be regarded as the topological universal cover. 
Hence there exists a natural lifting
$G^\sharp\subset \Aut (X^\sharp\ni P^\sharp)$ fitting to the following exact sequence
$$
%\label{eq-seq-1}
1\longrightarrow C_m \longrightarrow G^\sharp \longrightarrow G \longrightarrow 1,
\eqno(*) 
$$
where $C_m\simeq \ZZ/m\ZZ$.
We claim that $G^\sharp$ is abelian.
Assume the converse. Then $m\ge 2$.
The group $G^\sharp$ permutes 
eigenspaces of $C_m$. 
% \subsubsection{}
% \begin{ppar}
% \label{classification-terminal-singularities}
Let $T_{P^\sharp, X^\sharp}$ be the tangent space and let $n:=\dim T_{P^\sharp, X^\sharp}$
be the embedded dimension.
By the classification of three-dimensional terminal singularities \cite{Mori-1985-cla}, \cite{Reid-YPG1987}
we have one of the following:
$$
\begin{array}{l@{\quad}l}
(1)&\frac1m (a,-a,b),\quad n=3,\quad \gcd (a,m)=\gcd(b,m)=1; 
\\[3pt]
(2)&\frac1m (a,-a,b,0),\quad n=4,\quad \gcd (a,m)=\gcd(b,m)=1; 
\\[3pt]
(3)&\frac14 (a,-a,b,2),\quad n=4,\quad \gcd (a,2)=\gcd(b,2)=1,\quad m=4,
\end{array}\eqno(**)
$$
where $\frac1m (a_1,\dots, a_n)$ denotes the diagonal action 
\[
x_k \longmapsto \exp(2\pi i a_k/m)\cdot x_k,\quad k=1,\dots,n.
\]
% \end{ppar}
Put $T= T_{P^\sharp, X^\sharp}$ in the first case and denote by 
$T\subset T_{P^\sharp, X^\sharp}$ the three-dimensional subspace $x_4=0$ in the 
second and the third cases. 
Then $C_m$ acts on $T$ freely outside of the origin and $T$ 
is $G^\sharp$-invariant.
By (*) %\eqref{eq-seq-1} 
we see that the derived subgroup
$[G^\sharp, G^\sharp]$ is contained in $C_m$.
In particular, $[G^\sharp, G^\sharp]$ 
is abelian and also acts on $T$ freely outside of the origin.
Assume that $[G^\sharp, G^\sharp]\neq \{1\}$.
Since $\dim T=3$, this implies that the representation of 
$G^\sharp$ on $T$ is irreducible
(otherwise $T$ has a one-dimensional invariant subspace, say $T_1$,
and the kernel of the map $G^\sharp\to \GL(T_1)\simeq \Bbbk^*$
must contain $[G^\sharp, G^\sharp]$). 
In particular, the eigenspaces of $C_m$ on $T$ have the same dimension.
Since $T$ is irreducible, the order of $G^\sharp$ is divisible by 
$3=\dim T$ and so $m>2$. In this case, by the above description of the action of $C_m$ on 
$T_{P^\sharp, X^\sharp}$ we get that there are exactly three
distinct eigenspaces $T_i\subset T$.
The action of $G^\sharp$ on the set $\{T_i\}$ induces a transitive homomorphism
$G^\sharp\to \Sym_3$ whose kernel contains $C_m$. Hence we have 
a transitive homomorphism $G\to \Sym_3$.
Since $G$ is a $2$-group, this is impossible.

Thus $G^\sharp$ is abelian. Then
\[
\rr(G)\le \rr(G^\sharp) \le \dim T_{P^\sharp, X^\sharp}.
\]
This proves our statement.
\hfill$\Box$ \end{proof}
\refstepcounter{theorem}
\begin{remark}\label{remark-fixed-point}
 If in the above notation the action of $G$ on $X$ is free in codimension one,
then $\rr(G)\le \dim T_{P^\sharp, X^\sharp}-1$.
\end{remark}

For convenience of references, we formulate the following easy result.
\begin{llemma}
\label{lemma-fixed-points}
Let $G$ be a $2$-elementary abelian group and 
let $X$ be a $G$-threefold with isolated singularities.
\begin{enumerate}
 \item 
If $\dim \Fix(G)>0$, then $\dim \Fix(G)+\rr(G)\le 3$.
\item 
Let $\delta\in G\setminus \{1\}$ and let $S\subset \Fix(\delta)$ be the union of two-dimensional components.
Then $S$ is $G$-invariant and smooth in codimension $1$.
\end{enumerate}
\end{llemma}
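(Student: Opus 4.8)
The plan is to reduce both parts to the local action of a finite group near a \emph{smooth} fixed point, using the standard linearization lemma (which, exactly as in the proof of Lemma~\ref{lemma-fixed-point}, one legitimizes after base changing to $\CC$): if a finite group $H$ acts on a smooth variety with a fixed point $P$, then analytically near $P$ the action is linear, so $\Fix(H)$ is smooth at $P$ with tangent space the invariant subspace $(T_{P,X})^{H}$, and the representation $H\to\GL(T_{P,X})$ is faithful (an element acting trivially near $P$ acts trivially on the irreducible $X$, hence is the identity).

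For part (i), I would first pick an irreducible component $Z\subseteq\Fix(G)$ with $\dim Z=\dim\Fix(G)=:d$. By hypothesis $d\ge 1$, and since $\Sing(X)$ is finite a general point $P\in Z$ is a smooth point of $X$. At such a $P$ the group $G$ acts faithfully on $T_{P,X}\simeq\Bbbk^{3}$; writing $T_{P,X}=T^{G}\oplus T'$ with $T^{G}$ the invariant part, smoothness of $\Fix(G)$ at $P$ gives $\dim T^{G}=\dim_{P}\Fix(G)=d$, hence $\dim T'=3-d$. The group $G$ still acts faithfully on $T'$, and being abelian it diagonalizes there; being $2$-elementary it therefore embeds into $(\ZZ/2\ZZ)^{3-d}$, so $\rr(G)\le 3-d$, which is exactly $\dim\Fix(G)+\rr(G)\le 3$.

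For part (ii), $G$-invariance of $S$ is formal: since $G$ is abelian, each $g\in G$ commutes with $\delta$, so $g(\Fix(\delta))=\Fix(g\delta g^{-1})=\Fix(\delta)$, and $g$, being an automorphism of $X$, permutes the irreducible components of $\Fix(\delta)$ preserving their dimensions, hence fixes the union $S$ of the two-dimensional ones. For smoothness in codimension $1$ I would show $\Sing(S)\subseteq\Sing(X)$, which is finite: if $P\in S$ is a smooth point of $X$, then $\Fix(\delta)$ is smooth at $P$, hence locally irreducible there, so the unique irreducible component of $\Fix(\delta)$ through $P$ is the two-dimensional component of $S$ containing $P$; thus $S$ coincides with $\Fix(\delta)$ in a neighborhood of $P$ and is smooth at $P$. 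Therefore $\Sing(S)$ is contained in the finite set $\Sing(X)$, i.e. $\dim\Sing(S)\le 0$.

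I do not expect a real obstacle here. The only points needing a little care are the appeal to analytic linearization (already used for Lemma~\ref{lemma-fixed-point}, so valid over $\CC$ and hence over $\Bbbk$) and the bookkeeping that the isolatedness of $\Sing(X)$ is what lets us place $P$ in $X\setminus\Sing(X)$ in part (i) and forces $\dim\Sing(S)\le 0$ in part (ii); both the hypothesis $\dim\Fix(G)>0$ in (i) and "isolated singularities" are genuinely used.
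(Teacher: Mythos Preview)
Your proposal is correct and is exactly the argument the paper has in mind: the paper's own proof is just the one-line sketch ``consider the action of $G$ on the tangent space at a general point of a component of $\Fix(G)$ (resp.\ of $\Sing(S)$)'', and you have faithfully unpacked this into the linearization at a smooth point, the decomposition $T_{P,X}=T^{G}\oplus T'$ with the diagonalization bound $\rr(G)\le\dim T'=3-d$ for (i), and the observation $\Sing(S)\subset\Sing(X)$ for (ii). The only cosmetic difference is that for (ii) the paper phrases it as looking at a general point of $\Sing(S)$ (deriving a contradiction if $\dim\Sing(S)\ge 1$), whereas you directly show the containment; these are the same argument.
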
\def\proofname{Sketch of the proof}
\begin{proof}
Consider the action of $G$ on 
the tangent space to $X$ at a general point of a component of $\Fix(G)$
(resp. at a general point of $\Sing(S)$). 
\hfill$\Box$ \end{proof}
\def\proofname{Proof}

\section{$G$-equivariant minimal model program.}\label{section-MMP}\setcounter{theorem}{0}
\refstepcounter{theorem}
\begin{definition}
Let $G$ be a finite group.
A \emph{$G$-variety} is a variety $X$ provided with 
a biregular faithful action of $G$.
We say that a normal $G$-variety 
$X$ is \emph{$G\QQ$-factorial} if any $G$-invariant Weil 
divisor on $X$ is $\QQ$-Cartier. 
\end{definition}

The following construction is standard (see e.g. \cite{Prokhorov2009e}).

Let $Y$ be a rationally connected three-dimensional algebraic variety
and let $G\subset \Bir(Y)$ be a finite subgroup.
Taking an equivariant compactification and running an equivariant 
minimal model program we get a $G$-variety $X$ and a $G$-equivariant 
birational map $Y \dashrightarrow X$, where $X$ 
has a structure a \textit{$G$-Fano-Mori fibration} $f: X\to B$.
This means that $X$ has at worst terminal $G\QQ$-factorial singularities, 
$f$ is a $G$-equivariant morphism with connected fibers, $B$ is normal,
$\dim B<\dim X$, 
 the anticanonical Weil
divisor $-K_{X}$ is ample over $B$, and the relative $G$-invariant Picard 
number $\rho(X)^G$ equals to one.
Obviously, in the case $\dim X=3$ we have the following possibilities:
\begin{enumerate}
\item[(C)]
$B$ is a rational surface and a general fiber $f^{-1}(b)$ is a conic;
\item[(D)]
$B\simeq \PP^1$ and a general fiber $f^{-1}(b)$ is a smooth del Pezzo surface;
\item[(F)]
$B$ is a point and $X$ is a \textit{$G\QQ$-Fano threefold}, that is,
$X$ is a Fano threefold with at worst terminal $G\QQ$-factorial singularities
and such that $\Pic(X)^G\simeq \ZZ$.
In this situation we say that $X$ is \textit{$G$-Fano threefold} if 
$X$ is Gorenstein, that is, $K_{X}$ is a Cartier divisor.
\end{enumerate}

\begin{pproposition}\label{propositionFano-Mori-fibration}
Let $G$ be a $2$-elementary abelian group and let $f: X\to B$ 
be a $G$-Fano-Mori fibration with $\dim X=3$ and $\dim B>0$.
Then $\rr( G)\le 6$.
Moreover, if $\rr( G)= 6$ and $Z\simeq \PP^1$, then 
a general fiber $f^{-1}(b)$ is a del Pezzo surface of degree $4$ or $8$.
\end{pproposition}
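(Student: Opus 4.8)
The plan is to treat the two cases of a positive-dimensional base—case (C), where $f\colon X\to B$ is a conic bundle over a rational surface, and case (D), where $f\colon X\to \PP^1$ is a del Pezzo fibration—more or less separately, though both ultimately rest on the same principle: the action of $G$ splits into a part acting ``vertically'' (trivially on $B$, faithfully on a general fiber) and a part acting ``horizontally'' (faithfully on the base $B$, modulo the vertical part). Precisely, let $G_B\subset G$ be the kernel of the induced action $G\to\Aut(B)$ (or $\Aut(\Bbbk(B))$ after passing to the generic fiber), and let $\bar G=G/G_B$ be the image. Then $\rr(G)\le\rr(G_B)+\rr(\bar G)$, and the strategy is to bound each summand.

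\medskip

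For case (D), $\bar G$ acts faithfully on $B\simeq\PP^1$, so $\bar G\subset\PGL_2(\Bbbk)$; since a $2$-elementary abelian subgroup of $\PGL_2(\Bbbk)$ has rank at most $2$ (it must be conjugate into the normalizer of a torus or a subgroup of the Klein four-group type), we get $\rr(\bar G)\le 2$. The group $G_B$ acts faithfully on the generic fiber $X_\eta$, a del Pezzo surface over $\Bbbk(B)$. Here I would invoke Beauville's bound (Theorem~\ref{theorem-cr2}) over the non-closed field $\Bbbk(B)$: a $2$-elementary abelian group acting faithfully and birationally on a geometrically rational surface has rank $\le 4$, hence $\rr(G_B)\le 4$, yielding $\rr(G)\le 6$. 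For the sharpness/classification addendum (the equality case forcing degree $4$ or $8$), I would need to track \emph{when} both bounds are simultaneously attained: $\rr(G_B)=4$ over $\Bbbk(B)$ forces the generic fiber to be a del Pezzo surface of a restricted degree—by the classification in Theorem~\ref{theorem-cr2} and descent, the relevant cases are degree $4$ (an intersection of two quadrics, with the diagonal Klein-type action) and degree $8$ (a quadric surface $\PP^1\times\PP^1$, again with a rank-$4$ diagonal action), these being the geometrically rational del Pezzo surfaces that carry faithful $(\ZZ/2)^4$-actions.

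\medskip

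For case (C), $\bar G$ acts faithfully on the rational surface $B$, so by Theorem~\ref{theorem-cr2} applied to $B$ we already have $\rr(\bar G)\le 4$. The vertical part $G_B$ acts faithfully on the generic fiber, which is a conic over $\Bbbk(B)$; a smooth conic has automorphism group $\PGL_2$, and once more a $2$-elementary abelian subgroup has rank $\le 2$, so $\rr(G_B)\le 2$ and again $\rr(G)\le 6$. One subtlety is that $X$ need not be smooth and the discriminant locus of the conic bundle must be handled $G$-equivariantly; I would use $G$-$\QQ$-factoriality and $\rho(X)^G=1$ together with Lemma~\ref{lemma-fixed-point} and Lemma~\ref{lemma-fixed-points} to control fixed loci of individual involutions, but for the \emph{rank} bound the splitting argument above should suffice without delicate birational geometry.

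\medskip

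The main obstacle I anticipate is not the crude inequality $\rr(G)\le 6$—that falls out of the two $\le 2$ and $\le 4$ estimates combined—but rather making the reductions over the non-closed field $\Bbbk(B)$ rigorous: Beauville's theorem is stated over an algebraically closed field, so I must either cite an arithmetic version or argue that a faithful $(\ZZ/2)^r$-action on $X_\eta$ that does not descend appropriately would still, after base change to $\overline{\Bbbk(B)}$, give a faithful action on a rational surface of rank $r$, forcing $r\le 4$. The second delicate point is the equality analysis in case (D): showing that $\rr(G)=6$ with base $\PP^1$ genuinely forces degree $4$ or $8$ (and excludes, say, degree $1$, $2$, $3$, $5$, $6$) requires going through Beauville's classification of rank-$4$ actions on del Pezzo surfaces and checking which of those surface types occur as generic fibers of a del Pezzo fibration admitting the extra $\bar G$-action on $\PP^1$ compatibly—this is where the real casework lies.
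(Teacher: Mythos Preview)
Your approach matches the paper's exactly: decompose $G$ via the homomorphism to $\Aut(B)$, bound the image and kernel separately using the $\PGL_2$ bound and Beauville's Theorem~\ref{theorem-cr2} in the appropriate roles for cases (C) and (D), and invoke the rank-$4$ classification (packaged in the paper as Lemma~\ref{lemma-Beauville-del-Pezzo}) for the degree-$4$/$8$ conclusion. Your stated obstacle about non-closed fields dissolves once you notice that the kernel acts on \emph{every} fiber, so you may apply Theorem~\ref{theorem-cr2} and Lemma~\ref{lemma-Beauville-del-Pezzo} to a general \emph{closed} fiber $f^{-1}(b)$ with $b\in B(\Bbbk)$, a smooth del Pezzo surface over the algebraically closed field $\Bbbk$---no descent or arithmetic version of Beauville is needed.
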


\begin{proof}
Let $G_f\subset G$ (resp. $G_B\subset \Aut(B)$) 
be the kernel (resp. the image) of the homomorphism 
$G\to \Aut(B)$. 
Thus $G_B$ acts faithfully on $B$ 
and $G_f$ acts faithfully on the 
generic fiber $F\subset X$ of $f$. 
Clearly, $G_f$ and $G_B$ are $2$-elementary groups with 
$\rr( G_f) +\rr( G_B)=\rr( G)$.
Assume that $B\simeq \PP^1$. Then $\rr( G_B)\le 2$ by the classification of finite subgroups of $PGL_2(\Bbbk)$.
By Theorem \ref{theorem-cr2} we have
$\rr( G_f)\le 4$. If furthermore $\rr( G)= 6$, then 
$\rr( G_f)= 4$ and the assertion about $F$ follows by Lemma \ref{lemma-Beauville-del-Pezzo} below. 
This proves 
our assertions in the case $B\simeq \PP^1$.
The case $\dim B=2$ is treated similarly.
\hfill$\Box$ \end{proof}

\begin{llemma}[cf. \cite{Beauville2007}]\label{lemma-Beauville-del-Pezzo}
Let $F$ be a del Pezzo surface 
and let $G\subset \Aut(F)$ be a $2$-elementary abelian group
with $\rr(F)\ge 4$.
Then $\rr(F)= 4$ and one of the following holds:
\begin{enumerate}
\item
$K_F^2=4$, $\rho(F)^G=1$;
\item
$K_F^2=8$, $\rho(F)^G=2$.
\end{enumerate}
\end{llemma}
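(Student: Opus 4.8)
The plan is to work through the del Pezzo surfaces $F$ of each degree $K_F^2 = d$ and bound $\rr(G)$ for $G \subset \Aut(F)$ a $2$-elementary abelian subgroup. I split according to whether $F$ is minimal as a $G$-surface (i.e. $\rho(F)^G = 1$) or not. Recall that by running a $G$-equivariant minimal model program on $F$ we may always reduce, via $G$-equivariant birational contractions, to either a $G$-minimal del Pezzo surface or a $G$-conic bundle; but here $F$ is already given, so instead I use the classification of maximal $2$-elementary abelian subgroups of $\Aut(F)$ directly. The key input is that $\Aut(F)$ acts on $\Pic(F)$ preserving $K_F$ and the intersection form, hence we get a homomorphism $G \to W(F)$ into the Weyl group of the corresponding root lattice $K_F^\perp$; its kernel $G_0$ acts on $\Pic(F)$ trivially. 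For $d \le 6$ this kernel $G_0$ is small (for $d \le 5$ it injects into the automorphisms of the anticanonically (or suitably) embedded $F$ fixing the Picard group, a finite computation), and for $d \ge 7$ one analyzes $F$ directly.

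First I would dispose of the easy ranges. For $d = 9$, $F = \PP^2$ and $\Aut(F) = \PGL_3(\Bbbk)$; a $2$-elementary abelian subgroup lifts (projectively) to a subgroup of $\GL_3$ generated by commuting involutions, so after diagonalizing $\rr(G) \le 2$. For $d = 8$ we have $F = \PP^1 \times \PP^1$ (the case $F = \mathbb{F}_1$ is not $G$-minimal and reduces to $\PP^2$), with $\Aut(F) = (\PGL_2 \times \PGL_2) \rtimes \ZZ/2\ZZ$; the swap contributes at most one generator and each $\PGL_2$ factor at most two, giving $\rr(G) \le 5$, but if $\rr(G) = 4$ one checks the swap together with the two diagonal subgroups $(\ZZ/2\ZZ)^2$ in each factor forces $\rho(F)^G = 2$ (the swap must act, else $G \subset \PGL_2 \times \PGL_2$ gives $\rho^G = 2$ anyway). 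For $d = 7$, $F$ is $\PP^2$ blown up at two points and is never $G$-minimal. So the substantive cases are $d = 1,2,3,4,5,6$.

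For these I would invoke the known classification of finite automorphism groups of del Pezzo surfaces (Dolgachev–Iskovskikh, which the paper already cites for $\Cr_2$) and extract the maximal $2$-elementary abelian ones. The expected outcome: $d = 6$ gives $\Aut(F) \supset (\Bbbk^*)^2 \rtimes (\Sym_3 \times \ZZ/2\ZZ)$ with maximal $2$-elementary abelian subgroup of rank $\le 3$ (and such $F$ is anyway not $G$-minimal when the torus part acts, and when it does not one gets $\rho^G \ge 2$); $d = 5$ has $\Aut(F) = \Sym_5$, whose $2$-elementary abelian subgroups have rank $\le 2$; $d = 4$ is the critical case where $F$ is an intersection of two quadrics in $\PP^4$, $\Aut(F)^0$ is finite, and the maximal $2$-elementary abelian subgroup has rank exactly $4$ (coming from the $2$-torsion of the associated genus-$2$ construction / the group $(\ZZ/2\ZZ)^4$ acting on $\sum \lambda_i x_i^2 = \sum \mu_i x_i^2 = 0$ by sign changes), and for this one $\rho(F)^G = 1$; for $d = 3$ (cubic surface) the maximal $2$-elementary abelian subgroup in $W(E_6)$ compatible with an actual action has rank $\le 3$; similarly $d = 2$ (via $W(E_7)$, double cover of $\PP^2$ branched in a quartic, so $\Aut$ contains the Geiser involution centrally) gives rank $\le 4$ but one must check whether rank $4$ is attained by an \emph{abelian} $2$-group with $\rho^G = 1$ — and if it is, whether it is conjugate into a degree-$4$ example, which it should not be, so this needs care; $d = 1$ (via $W(E_8)$, Bertini involution central) similarly.

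The main obstacle I anticipate is \emph{not} the generic del Pezzo of each degree but pinning down, for $d = 2$ and $d = 1$, exactly which $2$-elementary abelian subgroups of the Weyl group $W(E_7)$, $W(E_8)$ are realized by honest automorphisms on surfaces that are simultaneously \emph{$G$-minimal}; a naive count in the Weyl group would allow rank up to $7$ or $8$, so the realizability and minimality constraints must cut this down to $\le 3$. The cleanest route is: a $2$-elementary abelian $G$ of rank $\ge 4$ acting on $F$ with $\rho(F)^G = 1$ forces (by Lemma~\ref{lemma-fixed-points}-type fixed-point counting on $F$, or rather by the Lefschetz / topological Euler-characteristic argument à la Beauville) strong constraints on the number of fixed curves of the involutions in $G$, and these are only compatible with $d = 4$ (with $\rho^G = 1$) or $d = 8$ (with $\rho^G = 2$) after also eliminating non-minimal cases by contracting $G$-invariant $(-1)$-curves and inducting on $d$. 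I would set up that induction on $\rho(F)$ as the backbone: if $\rho(F)^G > 1$ and $F$ is not $\PP^1 \times \PP^1$, there is a $G$-invariant extremal contraction, reducing either to a conic bundle (handled in Proposition~\ref{propositionFano-Mori-fibration}'s surface analogue) or to a del Pezzo of larger degree, so the rank-$4$ cases concentrate exactly where claimed.
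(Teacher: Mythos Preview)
The paper's proof is a bare citation to \cite[\S 3]{Beauville2007}; Beauville's argument there is not your degree-by-degree enumeration but a uniform one combining the representation of $G$ on $K_F^\perp\otimes\QQ$ (which has no invariants when $\rho(F)^G=1$) with the Lefschetz fixed-point formula for the involutions in $G$. You do gesture at this (``\`a la Beauville'') near the end, and that is in fact the intended route.

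Your case analysis can be completed, but it has gaps as written. For $d=8$ you stop at $\rr(G)\le 5$; to reach $\le 4$ observe that if the factor-swap lies in $G$ then commutativity forces $G\cap(\PGL_2\times\PGL_2)$ into the diagonal $\PGL_2$, giving $\rr(G)\le 3$ in that case. You also misplace the hard low-degree cases: $d=1$ is immediate since $G$ fixes the base point of $|{-}K_F|$ and hence embeds in $\GL_2(\Bbbk)$, and $d=2$ follows from the anticanonical double cover $F\to\PP^2$ together with the fact that $\PGL_3(\Bbbk)$ has $2$-elementary rank $2$; it is rather $d=3$ that requires real work, namely showing that the unique (up to conjugacy) $(\ZZ/2\ZZ)^4\subset\PGL_4(\Bbbk)$, the image of the extraspecial group $2^{1+4}$, stabilizes no cubic --- the center acts by $-1$ on the space of cubic forms, so there is no eigenline. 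Finally, your closing induction on $\rho(F)$ yields the bound $\rr(G)\le 4$ but not the assertion about $K_F^2$ for the \emph{given} $F$: after contracting to a $G$-minimal model $F'$ you must still argue that $F=F'$, for which you need an orbit-size estimate on the exceptional curves (e.g.\ neither rank-$4$ model admits a $G$-orbit of $\le 3$ points in general position, so no $G$-equivariant blow-up of either stays del Pezzo).
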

\begin{proof}
Similar to \cite[\S 3]{Beauville2007}.
\hfill$\Box$ \end{proof}

\refstepcounter{theorem}
\begin{example}\label{example-Cr}
Let $F\subset \PP^4$ be the quartic del Pezzo surface given by 
$\sum x_i^2=\sum \lambda_i x_i^2=0$ with $\lambda_i\neq \lambda_j$ for $i\neq j$ and let $G_f\subset \Aut(F)$ 
be the $2$-elementary abelian subgroup generated by involutions $x_i\mapsto -x_i$.
Consider also a $2$-elementary abelian subgroup $G_B\subset \Aut(\PP^1)$
induced by a faithful representation $\mathrm Q_8\to GL_2(\Bbbk)$ of the quaternion group $\mathrm Q_8$. 
 Then $\rr(G_f)=4$, $\rr(G_B)=2$, and $G:=G_f\times G_B$
naturally acts on $X:=F\times \PP^1$. Two projections give us two 
structures of $G$-Fano-Mori fibrations of types (D) and (C).
This shows that the bound $\rr(G)\le 6$ in Proposition \ref{propositionFano-Mori-fibration}
is sharp. Moreover, $X$ is rational and so we have an embedding 
$G\subset \Cr_3(\Bbbk)$.
\end{example}

\section{Actions on Fano threefolds}\label{section-Fano-threefolds}\setcounter{theorem}{0}
\noindent
{\bf Main assumption.}
From now on we assume that we are in the case (F), that is,
$X$ is a $G\QQ$-Fano threefold. 

\refstepcounter{theorem}
\begin{remark}\label{definition-S}
The group $G$ acts naturally on $H^0(X,{-}K_X)$.
If $H^0(X,{-}K_X)\neq 0$, then there exists 
a $G$-semi-invariant section $0\neq s\in H^0(X,{-}K_X)$
(because $G$ is an abelian group).
This section defines an invariant member
$S\in |{-}K_X|$.
\end{remark}

\begin{llemma}
\label{lemma-hyp-sect-0}
Let $X$ be a $G\QQ$-Fano threefold, where $G$ is a $2$-elementary abelian
group with $\rr(G)\ge 5$. 
Let $S$ be an invariant effective Weil divisor 
such that $-(K_X+S)$ is nef. 
Then the pair $(X,S)$ is log canonical \textup(lc\textup).
In particular, $S$ is reduced.
If $-(K_X+S)$ is ample, then 
the pair $(X,S)$ is purely log terminal \textup(plt\textup).
\end{llemma}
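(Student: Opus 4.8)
The plan is to argue by contradiction using the connectedness principle for log canonical centers, exploiting the large rank of $G$ to force a contradiction with Lemma~\ref{lemma-fixed-point}. First I would assume $(X,S)$ is not lc and let $c < 1$ be the log canonical threshold, so that $(X, cS)$ is log canonical but not klt, with a nonempty locus $Z$ of non-klt points. Since $-(K_X + cS) = -(K_X+S) + (1-c)S$ is the sum of a nef divisor and an effective divisor with positive coefficient, and $-K_X$ is ample on the $G\QQ$-Fano threefold $X$, a Shokurov-type connectedness argument (as in Koll\'ar's work, or \cite{Reid-YPG1987}-style techniques) shows that the minimal non-klt locus is connected; being $G$-invariant and connected, it must contain a $G$-fixed point $P$, or at least a point with large stabilizer. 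The key leverage: because $\rho(X)^G = 1$, the group $G$ cannot move $S$ off itself, so $S$ is genuinely $G$-invariant, and likewise any minimal lc center is $G$-invariant.

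The main step is then to localize at a point $P \in Z$ on the minimal non-klt center and study the induced action of the stabilizer on the germ $(X \ni P)$. If $Z$ has positive dimension, I would use part (i) of Lemma~\ref{lemma-fixed-points} together with adjunction/inversion of adjunction to reduce to a surface or curve situation where the del Pezzo bound of Theorem~\ref{theorem-cr2} (rank $\le 4$) applies, contradicting $\rr(G) \ge 5$. If $Z$ is a single point $P$, then $P$ is $G$-fixed, and I would look at the Zariski tangent space $T_{P,X}$: the non-klt condition at $P$ forces the local intersection multiplicity of $S$ (or $cS$) at $P$ to be large, which constrains the embedding dimension and the way $G$ acts, and combined with Lemma~\ref{lemma-fixed-point} (rank $\le 4$ on a terminal germ, with rank $4$ excluding cyclic quotients) this again contradicts $\rr(G) \ge 5$. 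For the ``plt'' refinement when $-(K_X+S)$ is ample, I would additionally rule out the case where the only lc center is $S$ itself being non-normal or having worse-than-plt singularities along a divisor: strict inequality $-(K_X+S)$ ample makes the connectedness argument produce a center contained in $S$, and $G$-equivariance plus the terminal (hence isolated-singularity) nature of $X$ forces that center to be a point or curve controlled by the same fixed-point lemmas.

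The hard part will be the precise bookkeeping in the connectedness/adjunction step: ensuring that the minimal non-klt center one produces is genuinely $G$-invariant (this uses $\rho(X)^G = 1$ essentially, so that $S$ and all auxiliary divisors are semi-invariant) and that the dimension reduction lands in a setting where one of the two rank bounds — Theorem~\ref{theorem-cr2} on del Pezzo surfaces or Lemma~\ref{lemma-fixed-point} on terminal germs — can be invoked cleanly. A secondary subtlety is handling the case $m \ge 2$ (non-Gorenstein points) where the index-one cover must be taken into account, but since $-(K_X+S)$ is at least nef and $S \in |{-}K_X|$-adjacent, the divisor $S$ passes through the non-Gorenstein points in a controlled way, and Remark~\ref{remark-fixed-point} handles the free-in-codimension-one situations. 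I expect the rank hypothesis $\rr(G) \ge 5$ to be exactly what defeats every branch, since $4$ is the universal ceiling coming from both the surface Cremona group and terminal singularity germs.
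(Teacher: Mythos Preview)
Your overall strategy---assume the pair is not lc, pass to a log canonical threshold, use Shokurov-type connectedness to locate a $G$-invariant minimal lc center, then contradict the rank hypothesis via the fixed-point lemmas---is exactly the paper's approach, and your treatment of the zero-dimensional case (a $G$-fixed point contradicting Lemma~\ref{lemma-fixed-point}) is correct. The plt refinement by perturbing with $\epsilon B$ is also what the paper does.

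There is, however, a real gap in your positive-dimensional branch. You invoke Theorem~\ref{theorem-cr2} (the $\Cr_2$ bound, rank $\le 4$), but the arithmetic does not close: if the minimal center $V$ is a curve, Lemma~\ref{lemma-fixed-points}(i) bounds the kernel of $G\to\Aut(V)$ by rank $\le 2$, while Theorem~\ref{theorem-cr2} would only bound the image by $4$, yielding $\rr(G)\le 6$---no contradiction with $\rr(G)\ge 5$. (If $V$ were a surface the count is even worse: kernel rank $\le 1$, image $\le 4$, total $\le 5$.) The paper's argument hinges on a sharper structural input you have not isolated: after replacing $S$ by a suitable $G$-invariant boundary $D$ as in \cite[Prop.~2.6]{Mori-Prokhorov-2008d}, the minimal lc center $V$ is either a point or a \emph{smooth rational curve}. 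Then for $V\simeq\PP^1$ one uses not Theorem~\ref{theorem-cr2} but the classification of finite subgroups of $\PGL_2(\Bbbk)$, which caps the image at rank $\le 2$, giving $\rr(G)\le 2+2=4<5$. Your phrase ``a surface or curve situation where the del Pezzo bound applies'' conflates the two dimensions and cites the wrong bound; without the $V\simeq\PP^1$ fact and the $\PGL_2$ estimate, the contradiction does not materialize.
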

\begin{proof}
Assume that the pair $(X,S)$ is not lc. 
Since $S$ is $G$-invariant and $\rho(X)^G=1$, we see that $S$ is 
numerically proportional to $K_X$. Hence $S$ is ample.
We apply quite standard
connectedness arguments of Shokurov \cite{Shokurov-1992-e-ba}
(see, e.g., \cite[Prop. 2.6]{Mori-Prokhorov-2008d}): 
for a suitable $G$-invariant boundary $D$, the pair 
$(X,D)$ is lc, the divisor $-(K_X+D)$ is ample,
and the minimal locus $V$ of log canonical singularities is also $G$-invariant. 
Moreover, $V$ is either a point or a smooth rational curve.
By Lemma \ref{lemma-fixed-point} we may assume that $G$ has no fixed points.
Hence, $V\simeq \PP^1$ and we have a map 
$\varsigma: G\to \Aut(\PP^1)$. 
By Lemma \ref{lemma-fixed-points} \ $\rr(\ker \varsigma)\le 2$.
Therefore, $\rr(\varsigma(G))\ge 3$.
This contradicts the classification of finite subgroups of $PGL_2(\Bbbk)$.

If $-(K_X+S)$ is ample and $(X,S)$ has a log canonical 
center of dimension $\le 1$, then by considering $(X,S'=S+\epsilon B)$, 
where $B$ is a suitable invariant divisor
and $0<\epsilon\ll 1$,
we get a non-lc pair $(X,S')$.
This contradicts the above considered case.
\hfill$\Box$ \end{proof}

\begin{ccorollary}
\label{corollary-hyp-sect-0}
Let $X$ be a $G\QQ$-Fano threefold, where $G$ is a $2$-elementary abelian
group with $\rr(G)\ge 6$ and 
let $S$ be an invariant Weil divisor. Then 
$-(K_X+S)$ is not ample. 
\end{ccorollary}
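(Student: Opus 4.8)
The goal is to deduce from Lemma \ref{lemma-hyp-sect-0} that, for a $G\QQ$-Fano threefold $X$ with $\rr(G)\ge 6$ and $S$ an invariant Weil divisor, the divisor $-(K_X+S)$ cannot be ample. The natural strategy is a dichotomy on the sign of the intersection number of $S$ with curves, exploiting $\rho(X)^G = 1$: since $S$ is $G$-invariant, its class lies in the $G$-invariant part of $\Pic(X)\otimes\QQ$, which is one-dimensional and generated (numerically) by $-K_X$. Hence $S \qq \lambda(-K_X)$ for some $\lambda\in\QQ$, and $-(K_X+S)\qq (1-\lambda)(-K_X)$. Since $-K_X$ is ample, $-(K_X+S)$ is ample precisely when $\lambda<1$, and nef precisely when $\lambda\le 1$.

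First I would argue that $S\ne 0$: if $S=0$ there is nothing to prove since $-(K_X+S)=-K_X$ is ample, so actually I must be careful — the statement should be read as covering the case where $S$ is a nonzero effective invariant divisor (when $S=0$ it fails), or more likely $S$ is tacitly assumed effective and nonzero, in which case $\lambda>0$. Assuming $-(K_X+S)$ is ample, Lemma \ref{lemma-hyp-sect-0} applies (its hypothesis $\rr(G)\ge 5$ is satisfied) and gives that $(X,S)$ is plt; in particular $S$ is reduced and irreducible (plt forces the boundary to be normal, hence here irreducible since any two components would meet and violate plt) and $(S,\Diff_S(0))$ is a klt log del Pezzo surface by adjunction, with $-(K_S+\Diff_S(0))$ ample. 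Then $G$ acts on $S$, and since $G_S:=\ker(G\to\Aut(S))$ acts trivially on a divisor through which — by $\rho(X)^G=1$ — every invariant divisor is proportional, one can bound $\rr(G_S)$ by a fixed-point/tangent-space argument as in Lemma \ref{lemma-fixed-points}, getting $\rr(G_S)$ small. I expect $\rr(G_S)\le 1$ here because $S$ is ample and $G_S$ fixes it pointwise. Consequently $\rr(G|_S)\ge 5$, i.e. $G$ acts on the log del Pezzo surface $S$ through a $2$-elementary group of rank $\ge 5$.

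The heart of the argument is then to contradict the existence of a rank-$\ge 5$ action of a $2$-elementary abelian group on such a surface. If $S$ were a genuine (smooth) del Pezzo surface this is immediate from Lemma \ref{lemma-Beauville-del-Pezzo}, which caps the rank at $4$. The difficulty is that $S$ is only a klt log del Pezzo, possibly singular and possibly non-$\QQ$-factorial after passing to the $G$-action, so Lemma \ref{lemma-Beauville-del-Pezzo} does not literally apply. The plan is to run a $G$-equivariant MMP on a $G$-minimal resolution of $S$: resolve $G$-equivariantly to get a smooth $G$-surface $\tilde S$, then contract $G$-orbits of $(-1)$-curves to reach either a $G$-del Pezzo surface (conic bundle or $\rho^G=1$ del Pezzo) and apply the two-dimensional classification — Theorem \ref{theorem-cr2} bounds the rank by $4$ on any rational surface, or more precisely Lemma \ref{lemma-Beauville-del-Pezzo} / Beauville's analysis bounds it — contradicting $\rr \ge 5$.

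The main obstacle will be controlling the two-dimensional part: making sure the fixed-locus bound on $\rr(G_S)$ is tight enough (one needs $\rr(G_S)\le 1$, not merely $\le 2$), and handling the possibly-singular surface $S$ rigorously via equivariant resolution plus equivariant MMP while tracking that the resulting rational surface still carries the rank-$\ge 5$ action faithfully. A cleaner route, which I would actually prefer, avoids the surface $S$ entirely: suppose for contradiction $-(K_X+S)$ is ample. Take a second invariant divisor — e.g. apply the construction of Lemma \ref{lemma-hyp-sect-0} with a larger boundary — or simply note that by Corollary \ref{corollary-hyp-sect-0}'s hypothesis $\rr(G)\ge 6$ we may invoke the three-dimensional structure: $(X,S)$ plt with $-(K_X+S)$ ample means $X$ is ``close to'' a Mori fiber structure over a base, and one derives a $G$-map to $\PP^1$ or to a point with small kernel, then contradicts $\rr(G)\ge 6$ against the bound from the base plus the fiber, exactly as in Proposition \ref{propositionFano-Mori-fibration}. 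Either way, the key input is that rank $6$ leaves no room once one dimension of symmetry is ``spent'' on $S$, and the technical heart is the equivariant birational bookkeeping on the surface $S$.
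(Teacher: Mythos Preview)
Your approach is essentially the paper's: assume $-(K_X+S)$ is ample, apply Lemma \ref{lemma-hyp-sect-0} to get $(X,S)$ plt, use adjunction to see that $S$ is an irreducible normal log del Pezzo surface (hence rational), then bound the image of $G$ in $\Aut(S)\subset\Bir(S)\simeq\Cr_2(\Bbbk)$ by Theorem \ref{theorem-cr2} and the kernel by Lemma \ref{lemma-fixed-points}(i), giving $\rr(G)\le 4+1=5$. The paper does exactly this in three lines; you need not run an equivariant surface MMP or invoke Lemma \ref{lemma-Beauville-del-Pezzo}---once $S$ is rational, Theorem \ref{theorem-cr2} applies to $\Aut(S)$ directly---and your ``alternative route'' at the end is too vague to constitute an argument.
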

\begin{proof}
If $-(K_X+S)$ is ample, then by Lemma 
\ref{lemma-hyp-sect-0} the pair $(X,S)$ is plt.
By the adjunction principle \cite{Shokurov-1992-e-ba} the surface
$S$ is irreducible, normal and has only quotient singularities.
Moreover, $-K_S$ is ample.
Hence $S$ is
rational. We get a contradiction by Theorem \ref{theorem-cr2}
and Lemma \ref{lemma-fixed-points}(i).
\hfill$\Box$ \end{proof}

\begin{llemma}\label{lemma-K3}
Let $S$ be a K3 surface 
with at worst Du Val singularities and let $\Gamma\subset \Aut(S)$ be a $2$-elementary abelian
group. Then $\rr(\Gamma)\le 5$.
\end{llemma}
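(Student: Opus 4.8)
I would pass to the minimal resolution $\tilde S \to S$, which is again a K3 surface (now smooth), and try to lift the $\Gamma$-action. Since $S$ has only Du Val singularities, the resolution is canonical and functorial, so $\Gamma$ acts on $\tilde S$ as well; moreover the action remains faithful because $\tilde S \to S$ is birational. Thus it suffices to bound $\rr(\Gamma)$ for $\Gamma \subset \Aut(\tilde S)$ with $\tilde S$ a smooth K3 surface. This is where the classical theory of finite automorphism groups of K3 surfaces enters.

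The key tool is the splitting of $\Aut(\tilde S)$ via the action on the holomorphic $2$-form $\omega_{\tilde S}$. Let $\chi\colon \Gamma \to \Bbbk^*$ be the character by which $\Gamma$ acts on $H^0(\tilde S, \Omega^2_{\tilde S})$, and let $\Gamma_0 = \ker\chi$ be the subgroup of symplectic automorphisms. Since $\Gamma$ is $2$-elementary, $\chi(\Gamma)\subset\{\pm 1\}$, so $[\Gamma:\Gamma_0]\le 2$ and $\rr(\Gamma)\le \rr(\Gamma_0)+1$. Now I invoke Nikulin's classification of finite abelian symplectic groups acting on K3 surfaces: the only $2$-elementary abelian group that occurs is $(\ZZ/2\ZZ)^4$ (this is on Nikulin's list of the finitely many symplectic groups, the largest $2$-elementary one). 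Hence $\rr(\Gamma_0)\le 4$, giving $\rr(\Gamma)\le 5$ as claimed.

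The main obstacle — or rather, the point needing the most care — is the appeal to the symplectic case: one must be sure the correct reference (Nikulin's work on finite groups of symplectic automorphisms of K3 surfaces, or Mukai's refinement) indeed caps the $2$-elementary symplectic rank at $4$, and that this applies to K3 surfaces with Du Val singularities after resolution (which it does, since we reduced to the smooth case). One should also check that passing to the resolution does not inadvertently enlarge the group or change whether the action is symplectic — it does not, since the pullback of $\omega_S$ spans $H^0(\tilde S,\Omega^2_{\tilde S})$ and $\Gamma$ acts on it through the same character. A minor alternative route, avoiding Nikulin entirely, is lattice-theoretic: the invariant sublattice of $H^2(\tilde S,\ZZ)$ under a symplectic $(\ZZ/2\ZZ)^r$ has corank equal to $8(1 - 2^{-r+?})$-type expressions bounded by $22$, which already forces $r\le 4$; but the cleanest exposition simply cites the classification.
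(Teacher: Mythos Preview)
Your proposal is correct and follows essentially the same route as the paper: pass to the minimal resolution to reduce to a smooth K3 surface, split $\Gamma$ via its action on $H^{2,0}$ into a symplectic kernel and a cyclic quotient of rank at most $1$, and invoke Nikulin's result to bound the symplectic part by rank $4$. The only cosmetic difference is that the paper phrases the splitting in terms of the subgroup acting trivially on $H^{2,0}(\tilde S)$ rather than via the character $\chi$, but these are the same thing.
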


\begin{proof}
Let $\tilde S\to S$ be the minimal resolution.
Here $\tilde S$ is a smooth K3 surface and the action of $\Gamma$
lists to $\tilde S$.
Let $\Gamma_{\mathrm {s}}\subset \Gamma$ be the largest subgroup that 
acts trivially on $H^{2,0}(\tilde S)\simeq \CC$.
The group $\Gamma/\Gamma_{\mathrm {s}}$ is cyclic. Hence, $\rr(\Gamma/\Gamma_{\mathrm {s}})\le 1$. 
According to \cite[Th. 4.5]{Nikulin-1980-aut} we have 
$\rr(\Gamma_{\mathrm {s}})\le 4$. Thus $\rr(\Gamma)\le 5$.
\hfill$\Box$ \end{proof}

\begin{ccorollary}\label{corollary-lemma-K3}
Let $X$ be a $G\QQ$-Fano threefold, where $G$ is a $2$-elementary abelian group. 
Let $S\in |{-}K_X|$ be a $G$-invariant member.
If $\rr(G)\ge 7$, then the singularities of $S$ are worse than Du Val.
\end{ccorollary}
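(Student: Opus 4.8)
The plan is to argue by contradiction: assume $\rr(G)\ge 7$ and that $S\in |{-}K_X|$ has at worst Du Val singularities. First I would invoke Corollary \ref{corollary-hyp-sect-0}: since $\rr(G)\ge 6$ and $S$ is an invariant Weil divisor with $K_X+S\sim 0$ (so $-(K_X+S)$ is nef but not ample), Lemma \ref{lemma-hyp-sect-0} applies to give that $(X,S)$ is log canonical, hence $S$ is reduced. Next, because $-(K_X+S)\qq 0$, by inversion of adjunction (Shokurov's adjunction principle, as in the proof of Corollary \ref{corollary-hyp-sect-0}) the log canonicity of $(X,S)$ along $S$ translates into the statement that $S$ itself is a semi-log-canonical Gorenstein surface with $K_S\sim 0$; under the extra hypothesis that $S$ has only Du Val singularities, $S$ is then a normal Gorenstein surface with Du Val singularities and trivial canonical class — that is, either an abelian surface, a bielliptic surface, or (the rationally connected case) a K3 surface with Du Val singularities. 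Since $X$ is rationally connected, $S$ is covered by rational curves, which rules out the abelian and bielliptic cases; hence $S$ is a K3 surface with at worst Du Val singularities.

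Now the action of $G$ on $X$ restricts to a faithful action on $S$: indeed, any element acting trivially on $S$ would act trivially on a neighborhood of $S$ in $X$, hence on $X$ itself since $X$ is connected and the action is algebraic. Therefore $G\hookrightarrow \Aut(S)$ is a $2$-elementary abelian subgroup of the automorphism group of a Du Val K3 surface. By Lemma \ref{lemma-K3} this forces $\rr(G)\le 5$, contradicting $\rr(G)\ge 7$. This contradiction shows that the singularities of $S$ must be strictly worse than Du Val.

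The main obstacle I anticipate is the step identifying $S$ as a (Du Val) K3 surface: one needs to know that $S$ is \emph{normal}, not merely reduced and semi-log-canonical, before Lemma \ref{lemma-K3} can be applied. Reducedness comes for free from Lemma \ref{lemma-hyp-sect-0}, but normality requires a bit more — for instance, observing that if $S$ had a non-normal locus it would contribute a log canonical center of $(X,S)$ of dimension $\le 1$, and then an argument in the spirit of the last paragraph of the proof of Lemma \ref{lemma-hyp-sect-0} (perturbing by a small invariant boundary) together with Lemma \ref{lemma-fixed-points} and the classification of finite subgroups of $PGL_2(\Bbbk)$ would bound $\rr(G)$, again contradicting $\rr(G)\ge 7$. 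Once normality is secured, the classification of Gorenstein surfaces with trivial canonical class and Du Val singularities, combined with rational connectedness of $X$, pins down $S$ as a Du Val K3, and the rest is immediate from Lemma \ref{lemma-K3}.
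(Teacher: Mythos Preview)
Your overall strategy matches the paper's: reduce to $S$ being a Du Val K3 and invoke Lemma~\ref{lemma-K3}. There is, however, a genuine error in your faithfulness step. The claim that ``any element acting trivially on $S$ would act trivially on a neighbourhood of $S$ in $X$'' is false. A reflection along $S$ (e.g.\ $(x,y,z)\mapsto(x,y,-z)$ fixing $\{z=0\}$) fixes $S$ pointwise but acts nontrivially on every normal direction. In fact the paper's Proposition~\ref{lemma-hyp-sect}(i), proved just after this corollary, shows that the kernel of $G\to\Aut(S)$ is \emph{always} nontrivial in this situation: there exists $\delta\in G\setminus\{1\}$ with $S\subset\Fix(\delta)$.

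The fix is easy and is what the paper implicitly uses. At a smooth point $P\in S$ (which is also smooth on $X$), any $\delta$ in the kernel acts on $T_{P,X}$ fixing the codimension-one subspace $T_{P,S}$; hence $\delta$ acts by $\pm1$ on the one-dimensional quotient $N_{S/X}|_P$. If two kernel elements both act by $-1$ there, their product fixes $T_{P,X}$ and is therefore the identity. Thus the kernel embeds in $\{\pm1\}$ and has rank $\le 1$. Applying Lemma~\ref{lemma-K3} to the faithful action of $G/\langle\delta\rangle$ yields $\rr(G)-1\le 5$, i.e.\ $\rr(G)\le 6$, contradicting $\rr(G)\ge 7$.

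One further remark: the obstacle you flag (normality of $S$) is not actually an issue here. You are assuming, for contradiction, that $S$ has at worst Du Val singularities; Du Val singularities are isolated, and a reduced Cohen--Macaulay surface with isolated singularities is automatically normal. So normality comes for free, and the only real gap is the faithfulness argument above.
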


\begin{pproposition}\label{lemma-hyp-sect}
Let $X$ be a $G\QQ$-Fano threefold, where $G$ is a $2$-elementary abelian
group with $\rr(G)\ge 6$.
Let $S\in |{-}K_X|$ be a $G$-invariant member
and let $G_{\bullet}\subset G$ be the largest subgroup that 
acts trivially on the set of components of $S$. 
One of the following holds:
\begin{enumerate}
\item 
$S$ is a K3 surface with at worst Du Val singularities, then 
$S\subset \Fix(\delta)$ for some $\delta\in G\setminus
\{1\}$ and $G/\langle \delta\rangle$ faithfully acts on $S$.
In this case $\rr(G)=6$.
\item 
The surface $S$ is reducible \textup(and reduced\textup).
The group $G$ acts transitively on the components of $S$
and $G_{\bullet}$ acts faithfully on each component $S_i\subset S$.
There are two subcases:
\begin{enumerate}
\item 
any component $S_i\subset S$ is rational and $\rr(G_{\bullet})\le 4$.
\item 
any component $S_i\subset S$ is
birationally ruled over an elliptic curve and $\rr(G_{\bullet})\le 5$.
\end{enumerate}
\end{enumerate}
\end{pproposition}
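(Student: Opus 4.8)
The plan is to analyze the $G$-action on the invariant anticanonical member $S \in |{-}K_X|$ according to whether $S$ is irreducible or not, and in each case squeeze the rank of $G$ using adjunction plus the surface-level results (Theorems \ref{theorem-cr2}, the K3 Lemma \ref{lemma-K3}) already available. First I would observe that, since $S \in |{-}K_X|$ is $G$-invariant, we have $K_X + S \sim 0$, so $-(K_X+S)$ is nef but not ample; thus Corollary \ref{corollary-hyp-sect-0} does not directly apply, but Lemma \ref{lemma-hyp-sect-0} does give that $(X,S)$ is log canonical and $S$ is reduced. This is the starting point for everything.

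Next I would split into cases by the action of $G$ on the set of irreducible components of $S$. If $S$ is irreducible, then by adjunction on the lc pair $(X,S)$ the surface $S$ has trivial dualizing sheaf (it is a "log canonical degeneration of K3 type"); if moreover its singularities are Du Val then $S$ is a K3 surface with Du Val singularities, and Lemma \ref{lemma-K3} gives $\rr(G') \le 5$ for the subgroup $G' \subseteq G$ acting faithfully on $S$. To push to $\rr(G)=6$ one must produce a nontrivial $\delta \in G$ acting trivially on $S$; here the key point is that the kernel $K$ of $G \to \Aut(S)$ acts on $X$ and fixes $S$ pointwise, hence $S \subseteq \Fix(\delta)$ for any $\delta \in K\setminus\{1\}$, and then Lemma \ref{lemma-fixed-points}(i) forces $\rr(K)\le 1$, so $\rr(K)=1$ and $\rr(G/\langle\delta\rangle)=5$, giving $\rr(G)=6$. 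The worse-than-Du-Val subcase is excluded here because we are assuming we land in case (i); that exclusion, or rather the dichotomy, is supplied by the overall structure and by Corollary \ref{corollary-lemma-K3} which already tells us that $\rr(G)\ge 7$ forces worse-than-Du-Val singularities — so under $\rr(G)\ge 6$ and Du Val singularities we must have $\rr(G)=6$.

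In the reducible case, I would first note $G$ permutes the components transitively: if it did not, the components in a single orbit would form a proper $G$-invariant effective divisor $S' < S$ with $S' \sim_{\QQ} \lambda (-K_X)$ for some $0<\lambda<1$, contradicting $\rho(X)^G=1$ (which forces any invariant divisor to be a rational multiple of $-K_X$, hence $S'$ would be all of $S$). Let $S_i$ be a component and $G_\bullet$ the stabilizer data as defined; then $G_\bullet$ acts on each $S_i$, and one checks the action is faithful because the kernel would fix a component pointwise, again violating Lemma \ref{lemma-fixed-points}(i). Now I would apply adjunction to the pair $(X, S)$: since $(X,S)$ is lc and $S$ is reducible, the different on each $S_i$ is nonzero, so $(S_i, \mathrm{Diff})$ is a log Calabi–Yau surface pair with nonempty boundary; classification of such pairs says $S_i$ is either rational (boundary of "del Pezzo type") or birationally ruled over an elliptic curve. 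In the rational case, $G_\bullet \hookrightarrow \Bir(S_i) \subset \Cr_2(\Bbbk)$ gives $\rr(G_\bullet)\le 4$ by Theorem \ref{theorem-cr2}; in the elliptic-ruled case one uses the fibration $S_i \to E$ and splits $G_\bullet$ into the part acting on the base elliptic curve (rank $\le$ a small bound from automorphisms/translations preserving a $2$-group, together with fixed-point constraints) and the part acting on a general fiber $\PP^1$ (rank $\le 2$), yielding $\rr(G_\bullet)\le 5$.

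The main obstacle I expect is the reducible case with elliptic-ruled components: pinning down the bound $\rr(G_\bullet)\le 5$ requires understanding how a $2$-elementary group can act on a surface birationally ruled over an elliptic curve $E$, which means controlling $2$-elementary subgroups of $\Aut(E)$ (translations by $2$-torsion contribute rank $2$, plus possibly the involution $-1$) versus what acts along the ruling, and reconciling this with the fixed-point dimension constraint of Lemma \ref{lemma-fixed-points}(i) on $S_i$ itself. A secondary subtlety is verifying faithfulness of $G_\bullet$ on components and correctly handling the adjunction/different computation to distinguish the two subcases; this is where I would be most careful, though I expect it to be routine once the log CY surface classification is invoked.
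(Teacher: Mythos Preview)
Your overall architecture matches the paper's, but there is a genuine gap in the irreducible case, and the correct dichotomy is \emph{normal vs.\ non-normal}, not irreducible vs.\ reducible.

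When $S$ is irreducible you assert that it is a K3 surface with Du Val singularities and then wave at Corollary~\ref{corollary-lemma-K3} for the ``worse-than-Du-Val subcase''. But Corollary~\ref{corollary-lemma-K3} points the wrong way: it says $\rr(G)\ge 7$ \emph{forces} worse-than-Du-Val, not that worse-than-Du-Val is impossible when $\rr(G)\ge 6$. You have not excluded the possibility that $S$ is irreducible and normal with a non-Du-Val (strictly lc) point, nor the possibility that $S$ is irreducible but non-normal. The paper handles the first by a substantive argument you are missing: by Shokurov's Connectedness Principle the non-Du-Val locus of a normal $S$ with $K_S\sim 0$ consists of at most two points, hence an index-$\le 2$ subgroup of $G$ fixes one of them, contradicting Lemma~\ref{lemma-fixed-point} since $\rr(G)\ge 6$. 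The second possibility (irreducible non-normal $S$) is absorbed into the non-normal analysis, where the bounds $\rr(G_\bullet)\le 4$ or $\le 5$ then directly contradict $\rr(G)\ge 6$; this is why the paper splits on normality rather than reducibility.

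Two smaller points. Your transitivity argument is muddled: you write that a proper $G$-orbit $S'<S$ satisfies $S'\sim_\QQ \lambda(-K_X)$ and then say this contradicts $\rho(X)^G=1$, but being a rational multiple of $-K_X$ is exactly what $\rho(X)^G=1$ says, not a contradiction. The actual contradiction is that $-(K_X+S')$ is ample, which is ruled out by Corollary~\ref{corollary-hyp-sect-0}. For faithfulness of $G_\bullet$ on $S_i$ in the reducible case, Lemma~\ref{lemma-fixed-points}(i) only gives that the kernel has rank $\le 1$, not that it is trivial; the paper instead uses Lemma~\ref{lemma-fixed-points}(ii): if some $\delta\in G_\bullet$ fixed $S_i$ pointwise, then (since $G$ is abelian and transitive on components) $\delta$ would fix all of $S$ pointwise, but $S$ is singular in codimension~$1$ along the intersections of its components, contradicting the smoothness-in-codimension-$1$ of the divisorial part of $\Fix(\delta)$.
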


\begin{proof}
By Lemma \ref{lemma-hyp-sect-0} the pair $(X,S)$ is lc.
Assume that $S$ is normal (and irreducible).
By the adjunction formula $K_S\sim 0$.
We claim that $S$ has at worst Du Val singularities. Indeed, otherwise 
by the Connectedness Principle \cite[Th. 6.9]{Shokurov-1992-e-ba}
$S$ has at most two non-Du Val points. 
These points are fixed by an index two subgroup 
$G'\subset G$. This contradicts Lemma \ref{lemma-fixed-point}.
Taking Lemma \ref{lemma-K3} into account we get the case (i).

Now assume that $S$ is not normal.
Let $S_i\subset S$ be an irreducible component (the case $S_i=S$
is not excluded).
If the action on components $S_i\subset S$ 
is not transitive, 
there is an invariant divisor $S'<S$.
Since $X$ is $G\QQ$-factorial and $\rho(X)^G=1$,
the divisor $-(K_X+S')$ is ample.
This contradicts Corollary \ref{corollary-hyp-sect-0}.
By Lemma \ref{lemma-fixed-points}(ii) the action of $G_{\bullet}$ 
on each component $S_i$ is faithful.

If $S_i$ is a rational surface, then $\rr(G_{\bullet})\le 4$ by 
Theorem \ref{theorem-cr2}.
Assume that $S_i$ is not rational.
Let $\nu\colon S'\to S_i$
be the normalization. 
Write $0\sim \nu^*(K_X+S)=K_{S'}+D'$,
where $D'$ is the \emph{different}, see \cite[\S 3]{Shokurov-1992-e-ba}. 
Here $D'$ is an effective reduced divisor
and the pair is lc \cite[3.2]{Shokurov-1992-e-ba}.
Since $S$ is not normal, $D'\neq 0$.
Consider the minimal resolution 
$\mu\colon \tilde S\to S'$ and let $\tilde D$ be the crepant 
pull-back of $D'$, that is,
$\mu_*\tilde D=D'$ and
\[
K_{\tilde S}+\tilde D=\mu^*(K_{S'}+D')\sim 0.
\]
Here $\tilde D$ is again an effective reduced divisor.
Hence $\tilde S$ is a ruled surface. Consider the Albanese map $\alpha: \tilde S\to C$. 
Let $\tilde D_1\subset \tilde D$ be an $\alpha$-horizontal
component. By the adjunction formula $\tilde D_1$ is an elliptic curve and so $C$ is. 
Let $\Gamma$ be the image of $G_{\bullet}$ in $\Aut(C)$.
Then $\rr(\Gamma)\le 3$ and so $\rr(G_{\bullet})\le 5$.
So, the last assertion is proved.
\hfill$\Box$ \end{proof}

\section{Non-Gorenstein Fano threefolds}\label{section-Non-Gorenstein-Fano}\setcounter{theorem}{0}
Let $G$ be a $2$-elementary abelian group and let $X$ be 
$G\QQ$-Fano threefold.
In this section we consider the case where $X$ is non-Gorenstein, i.e., it has at least one 
terminal point of index $>1$.
We denote by 
$\Sing'(X)=\{P_1,\dots,P_M\}$ the set of non-Gorenstein points and by
$\B=\B(X)$ the 
\textit{basket of singularities} \cite{Reid-YPG1987}.
By $\B(X,P_i)$ we denote the basket of singularities at a point $P_i\in X$.

\begin{pproposition}
\label{proposition-non-Gorenstein}
Let $X$ be a non-Gorenstein Fano threefold with terminal singularities.
Assume that $X$ admits a faithful action of 
a $2$-elementary abelian group $G$ with $\rr(G)\ge 6$. Then 
$\rr(G)=6$, $G$ transitively acts on $\Sing'(X)$, $|{-}K_X |\neq\emptyset$,
and the configuration of non-Gorenstein singularities is described
below.
\begin{enumerate}\renewcommand\labelenumi{\textup{(\arabic{enumi})}}
\renewcommand\theenumi{\textup{(\arabic{enumi})}}
\item\label{proposition-non-Gorenstein-1}
$M=8$, $\B(X,P_i)= \left\{\frac12(1,1,1)\right\}$;
\item\label{proposition-non-Gorenstein-2}
$M=8$, $\B(X,P_i)=\left\{ \frac13(1,1,2)\right\}$;
\item\label{proposition-non-Gorenstein-3}
$M=4$, $\B(X,P_i)=\left\{ 2\times \frac12(1,1,1)\right\}$;
\item\label{proposition-non-Gorenstein-4}
$M=4$, $\B(X,P_i)=\left\{ 2\times \frac13(1,1,2)\right\}$;
\item \label{proposition-non-Gorenstein-5}
$M=4$, $\B(X,P_i)=\left\{ 3\times \frac12(1,1,1)\right\}$;
\item \label{proposition-non-Gorenstein-6}
$M=4$, 
$\B(X,P_i)= \left\{\frac14(1,-1,1),\, \frac12(1,1,1)\right\}$.
\end{enumerate}
\end{pproposition}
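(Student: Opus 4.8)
The plan is to exploit the numerical constraints coming from the orbifold Riemann--Roch formula for $-K_X$ combined with the group-theoretic restrictions already established. First I would apply Corollary~\ref{corollary-hyp-sect-0} and the Main assumption: since $\rr(G)\ge 6$, for every $G$-invariant Weil divisor $S$ the divisor $-(K_X+S)$ is not ample, which forces strong bounds on how large $(-K_X)^3$ and the anticanonical degree can be. I would then invoke Kawamata's bound and the Bogomolov--type inequalities for the contributions of terminal cyclic quotient points to the Riemann--Roch expression $\chi(\mathcal O_X)=\chi(\mathcal O_X(-K_X))-\text{(basket terms)}$, using the standard formula
\[
(-K_X)\cdot c_2(X)=24\,\chi(\mathcal O_X)-\sum_{P\in\B}\Bigl(r_P-\frac1{r_P}\Bigr)
\]
for Fano threefolds with terminal singularities. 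Since $X$ is rationally connected, $\chi(\mathcal O_X)=1$, and the left side is positive (Miyaoka/Kawamata), so $\sum_P(r_P-1/r_P)<24$. This already bounds the total ``weight'' of the basket.

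The second step is to bring in the $G$-action. Since $\rho(X)^G=1$ and $G$ is $2$-elementary of rank $\ge 6$, the set $\Sing'(X)$ of non-Gorenstein points is permuted by $G$; I would show the action is transitive on $\Sing'(X)$ by a Picard-number/invariance argument (otherwise one gets a $G$-invariant proper subset whose associated divisorial data contradicts $\rho(X)^G=1$, or one can separate the basket into $G$-invariant pieces contradicting the extremal assumption). Transitivity means $M=|\Sing'(X)|$ is the index of the stabilizer $G_1$ of a point $P_1$, hence $M=2^{6-\rr(G_1)}$. By Lemma~\ref{lemma-fixed-point} applied to the local action $G_1\subset\Aut(X\ni P_1)$ we have $\rr(G_1)\le 4$, and in the cyclic-quotient case $\rr(G_1)\le 3$; moreover Remark~\ref{remark-fixed-point} gives $\rr(G_1)\le n-1$ when the action is free in codimension one. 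Combining $M=2^{6-\rr(G_1)}$ with $\rr(G_1)\le 4$ yields $M\in\{4,8\}$ (the value $M=2$ being excluded by the basket-weight bound, since two points each carrying a heavy basket would still fit but then $\rr(G_1)=5$ contradicts Lemma~\ref{lemma-fixed-point}; $M=1$ forces a $G$-fixed point, contradicting Lemma~\ref{lemma-fixed-points}(i) for $\rr(G)\ge 4$).

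The third step is the enumeration. For each point $P_i$ the local basket $\B(X,P_i)$ must be $G_i$-invariant, where $G_i$ is the $2$-group stabilizer with $2^{6}/M$ elements; I would combine (a) the global inequality $\sum_P(r_P-1/r_P)<24$, which after multiplying by $M$ restricts $\B(X,P_i)$ severely; (b) the classification of terminal cyclic quotients $\frac1r(1,-1,b)$ and the constraint that a $2$-group acting on the germ with large rank is possible only for small $r$ (from Lemma~\ref{lemma-fixed-point}: rank $4$ needs a non-cyclic-quotient point, which among terminal points of index $>1$ essentially forces the $cAx/4$ point $\frac14(1,-1,1)$ appearing in case~(6)); and (c) the fact that $|{-}K_X|\ne\emptyset$, which I would derive from orbifold Riemann--Roch once the basket is pinned down (the ``defect'' from the basket terms is small enough that $h^0(-K_X)\ge 1$). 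Running through the finitely many admissible baskets of bounded weight that are invariant under a $2$-group of the right order, with the right number of points $M\in\{4,8\}$, and checking that the index-$2$ points must be $\frac12(1,1,1)$, index-$3$ points $\frac13(1,1,2)$, and the only index-$4$ possibility is $\frac14(1,-1,1)$ paired with a $\frac12(1,1,1)$, produces exactly the six configurations listed.

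The main obstacle I expect is the third step: ruling out all ``intermediate'' baskets --- e.g. mixed baskets with both index-$2$ and index-$3$ points at the \emph{same} orbit point, or baskets with higher-index cyclic points like $\frac15$ or $\frac12(1,1,1)$ appearing with multiplicity that is not $G$-compatible --- requires a careful bookkeeping of which multisets of terminal singularity types admit a faithful action of $(\ZZ/2)^k$ with $k=6-\log_2 M\in\{3,4\}$, using Lemma~\ref{lemma-fixed-point} locally point-by-point, together with the global numerical ceiling. The delicate case is $M=4$ with $\rr(G_i)=4$, where Lemma~\ref{lemma-fixed-point} says the stabilized point cannot be a cyclic quotient singularity, so it must be (up to the basket description) a collection forcing the $cAx/4$ germ; reconciling ``basket has a non-cyclic-quotient point'' with ``basket consists of cyclic quotient types'' is exactly what singles out configuration~(6), and this matching of local representation theory with Reid's basket formalism is where the real work lies.
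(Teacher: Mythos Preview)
Your overall architecture matches the paper's: the Kawamata--Miyaoka positivity gives $\sum_{P\in\B}(m_P-1/m_P)<24$, Lemma~\ref{lemma-fixed-point} bounds stabilizer ranks, and one then enumerates. Your formula for $(-K_X)\cdot c_2$ and the conclusion $\sum(m_P-1/m_P)<24$ are correct, and your identification of the $cAx/4$ germ as the source of case~(6) is right.

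There is, however, a genuine gap in your second step. Your proposed proof of transitivity is wrong: non-Gorenstein points are isolated and carry no ``associated divisorial data,'' so a $G$-invariant proper subset of $\Sing'(X)$ does not contradict $\rho(X)^G=1$ in any way, and ``contradicting the extremal assumption'' is not an argument. The paper's route is purely numerical and you already have all the ingredients for it. Write the inequality orbit-by-orbit:
\[
\sum_{i=1}^{n} 2^{\,r-r_i}\sum_{j=1}^{\nu_i}\Bigl(m_{i,j}-\tfrac{1}{m_{i,j}}\Bigr)<24,
\]
where $r=\rr(G)$ and $r_i=\rr(G_i)$ for the stabilizer of a representative $P^{(i)}$ of the $i$-th orbit. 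By Lemma~\ref{lemma-fixed-point}, a cyclic quotient point has $\nu_i=1$ and $r_i\le 3$; a non-cyclic one has $\nu_i\ge 2$ and $r_i\le 4$. Since each $m_{i,j}-1/m_{i,j}\ge 3/2$, the $i$-th summand is at least $3\cdot 2^{\,r-4}$ in either case. With $r\ge 6$ this is $\ge 12$, so $n\le 1$; since $X$ is non-Gorenstein, $n=1$ and the action is transitive. With $r\ge 7$ the single summand is already $\ge 24$, a contradiction; hence $r=6$. This is also where $\rr(G)=6$ comes from --- you never actually derive it, but start using $M=2^{6-\rr(G_1)}$ as though it were given.

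Two smaller points. Your opening paragraph about Corollary~\ref{corollary-hyp-sect-0} forcing bounds on $(-K_X)^3$ is a detour; it plays no role here and should be dropped. And in the enumeration, once $n=1$ and $r=6$ are known, the paper simplifies by using that for any terminal point which is not of type $cAx/4$ all basket indices $m_{1,j}$ coincide; this immediately reduces the inequality to $2^{6-r_1}\nu_1(m-1/m)<24$ and makes the case analysis mechanical, rather than the open-ended search over ``admissible baskets'' you describe.
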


\begin{proof}
Let $P^{(1)},\dots, P^{(n)}\in \Sing'(X)$ be representatives of distinct $G$-orbits and
let $G_i$ be the stabilizer of $P^{(i)}$.
Let $r:=\rr(G)$, $r_i:=\rr(G_i)$, and let $m_{i,1},\dots,m_{i,\nu_i}$ be the indices of points in the basket of $P^{(i)}$. 
We may assume that $m_{i,1}\ge \cdots \ge m_{i,\nu_i}$
By the orbifold Riemann-Roch formula \cite{Reid-YPG1987} and
a form of Bogomolov-Miyaoka 
inequality \cite{Kawamata-1992bF}, \cite{KMMT-2000} we have
$$
%\label{eq-Kawamata-Bogomolov} 
\sum_{i=1}^n 2^{r-r_i} \sum_{j=1}^{\nu_i} \left( m_{i,j}-\frac1{m_{i,j}} \right)<24.
\eqno(***)
$$
If $P$ is a cyclic quotient singularity, then $\nu_i=1$ and by Lemma \ref{lemma-fixed-point} $r_i\le 3$.
If $P$ is not a cyclic quotient singularity, then $\nu_i\ge 2$ and 
again by Lemma \ref{lemma-fixed-point} \ $r_i\le 4$. Since $m_{i,j}-1/m_{i,j}\ge 3/2$, in both cases we have 
\[
2^{r-r_i} \sum_{j=1}^{\nu_i} \left( m_{i,j}-\frac1{m_{i,j}} \right)\ge 3\cdot 2^{r-4}\ge 12.
\]
Therefore, $n=1$, i.e. $G$ transitively acts on $\Sing'(X)$, and $r=6$.

If $P$ is not a point of type $cAx/4$ (i.e. it is not as in (3) of 
(**),
%\ref{classification-terminal-singularities})
then 
by the classification of terminal singularities \cite{Reid-YPG1987}
$m_{1,1}= \cdots = m_{1,\nu_i}$ and 
(***) % \eqref{eq-Kawamata-Bogomolov}
has the form
\[
24> 2^{6-r_1} \nu_1 \left( m_{1,1}-\frac1{m_{1,1}} \right)\ge 
 8 \left( m_{1,1}-\frac1{m_{1,1}} \right).
\]
Hence $r_1\ge 3$, $\nu_1\le 3$, $m_{1,1}\le 3$, and
$3\cdot 2^{r_1-3}\ge \nu_1 m_{1,1}$.
If $r_1= 3$, then $\nu_1=1$.
If $r_1=4$, then $\nu_1\ge 2$ and $\nu_1m_{1,1}\le 6$.
This gives us the possibilities \ref{proposition-non-Gorenstein-1}-\ref{proposition-non-Gorenstein-5}.

Assume that $P$ is a point of type $cAx/4$.
Then $m_{1,1}=4$, $\nu_1>1$, and $m_{1,j}=2$ for $1<j\le \nu_1$.
Thus (***) %\eqref{eq-Kawamata-Bogomolov}
has the form
\[
24> 2^{6-r_1} \left(\frac {15}4 + \frac32 (\nu_1-1) \right)=
 2^{4-r_1} \left(9 + 6\nu_1 \right).
\]
We get $\nu_1=2$, $r_1=4$, i.e. the case \ref{proposition-non-Gorenstein-6}.

Finally, the computation of $\dim |{-}K_X|$ follows by the orbifold Riemann-Roch formula \cite{Reid-YPG1987}
\[ 
\dim|{-}K_X | =
-\frac 12 K_X^3 + 2 - \sum_{P\in \B(X)} \frac{ b_P (m_P - b_P )}{2m_P}. 
\]
\end{proof}

\section{Gorenstein Fano threefolds}\label{section-Gorenstein-Fano}\setcounter{theorem}{0}
The main result of this section is the following:
\begin{pproposition}\label{proposition-Gorenstein}
Let $G$ be a $2$-elementary abelian group and let $X$ be a 
\textup(Gorenstein\textup) $G$-Fano threefold.
Then $\rr(G)\le 6$. Moreover, if $\rr(G)= 6$, then
$\Pic(X)=\ZZ\cdot K_X$ and ${-}K_X^3\ge 8$. 
\end{pproposition}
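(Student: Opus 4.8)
The plan is to produce a $G$-invariant anticanonical surface $S\in|{-}K_X|$, reduce to the case when $-K_X$ is primitive in $\Pic(X)$, and then read off everything from the trichotomy of Proposition~\ref{lemma-hyp-sect}. First the set-up: since $X$ is a Gorenstein terminal Fano threefold, $-K_X$ is an ample Cartier divisor, and Riemann--Roch together with Kawamata--Viehweg vanishing give $h^0(X,-K_X)=\frac12(-K_X)^3+3\ge4$, so by Remark~\ref{definition-S} a $G$-invariant $S\in|{-}K_X|$ exists. The same computation gives $h^0(X,A)=\chi(A)\ge1$ for every ample Cartier divisor $A$, hence $|A|$ carries a $G$-invariant member whenever $[A]\in\Pic(X)^G$. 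Assuming $\rr(G)\ge6$, I would first observe that $-K_X$ must generate $\Pic(X)^G$: if $-K_X=rA$ with $r\ge2$ and $[A]\in\Pic(X)^G$, then a $G$-invariant $T\in|A|$ gives $-(K_X+T)=(r-1)A$ ample, contradicting Corollary~\ref{corollary-hyp-sect-0}.

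The key step is to show $\rho(X)=1$. If $\rho(X)\ge2$, then since $\rho(X)^G=1$ no extremal ray of $\overline{NE}(X)$ is $G$-invariant: an invariant ray would give a $G$-equivariant extremal contraction $X\to Y$ with $\rho(Y)^G=0$, forcing $Y$ to be a point and $\rho(X)=1$. So $G$ permutes the $(\ge2)$ extremal rays with no fixed ray, and $X$ is then one of the ``highly symmetric'' Gorenstein Fano threefolds of Picard number $\ge2$ (the flag threefold $\PP(T_{\PP^2})$ with its ruling-swap, products, and the like). I would run through the classification of such threefolds and bound, case by case, the rank of their $2$-elementary automorphism subgroups by $5$, contradicting $\rr(G)\ge6$. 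Hence $\rho(X)=1$; since an automorphism sends the ample generator of $\Pic(X)=\ZZ$ to itself, $\Pic(X)=\Pic(X)^G$, and combining with the previous paragraph, $\Pic(X)=\ZZ\cdot K_X$.

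Now $[S]=-K_X$ is primitive, so $S$ is irreducible; this excludes case (ii) of Proposition~\ref{lemma-hyp-sect}, and we land in case (i), which in particular gives $\rr(G)=6$. (If one only assumes $\rr(G)\ge7$, the same chain of arguments still applies and again lands in case (i) with $\rr(G)=6$, a contradiction; so $\rr(G)\le6$ unconditionally.) Finally, $X$ with $\Pic(X)=\ZZ\cdot K_X$ is a prime Fano threefold of some genus $g$ with $(-K_X)^3=2g-2$; for $g\le4$ the anticanonical morphism embeds $\Aut(X)$ into $\PGL_{g+2}$ modulo a possible hyperelliptic involution, and since the rank of a $2$-elementary subgroup of $\PGL_4$, $\PGL_5$, $\PGL_6$ is at most $4,4,5$ respectively, no such $X$ admits a faithful $(\ZZ/2)^6$-action. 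Hence $g\ge5$ and $(-K_X)^3=2g-2\ge8$.

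The real work sits in the middle step: excluding $\rho(X)\ge2$ genuinely requires the classification of Gorenstein Fano threefolds of higher Picard number together with, for each, a Beauville--Nikulin-type estimate of the ranks of its $2$-elementary automorphism subgroups. Everything else is a short formal consequence of Proposition~\ref{lemma-hyp-sect}, Corollary~\ref{corollary-hyp-sect-0}, and the classification of prime Fano threefolds.
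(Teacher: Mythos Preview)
The gap is the inference ``$[S]=-K_X$ is primitive, so $S$ is irreducible.'' Primitivity of $-K_X$ in $\Pic(X)$ only prevents $S$ from splitting into \emph{Cartier} summands. But $X$ is a terminal Gorenstein threefold that is merely $G\QQ$-factorial, not factorial: the components $S_i$ of a reducible $S$ are Weil divisors and need not be Cartier when they pass through $\Sing(X)$. Thus a reducible $S=\sum S_i$ with non-Cartier $S_i$ is entirely compatible with $\Pic(X)=\ZZ\cdot K_X$, and you cannot force case~(i) of Proposition~\ref{lemma-hyp-sect} this way.

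This is exactly where the paper does the real work you are skipping. Your reductions to $\rho(X)=1$ and $\iota(X)=1$ match Lemmas~\ref{lemma-rho234} and~\ref{lemma-Gorenstein-index} (and your index argument via Corollary~\ref{corollary-hyp-sect-0} is in fact cleaner than the paper's case analysis of del~Pezzo threefolds), and your low-genus bound is the paper's $\rr(G)\le g+1$. But from there, assuming $\rr(G)\ge7$, the paper observes via Corollary~\ref{corollary-lemma-K3} that $S$ is \emph{not} Du~Val, hence lands in case~(ii) with $N\ge4$, and then splits into two subcases. If $X$ is smooth along $S$ then the $S_i$ \emph{are} Cartier and your primitivity argument does finish it. If $S\cap\Sing(X)\ne\emptyset$, however, one needs Lemma~\ref{corollary-action} to make the action free in codimension~$1$, Remark~\ref{remark-fixed-point} to bound stabiliser ranks at singular points by~$3$, the smoothing/Euler-characteristic bound $\#\Sing(X)\le29$ of Lemma~\ref{lemma-singularities} to pin down a single $G$-orbit of $16$ singular points, and finally the observation that this orbit then lies in every $G$-invariant anticanonical divisor and hence in $\Bs|{-}K_X|$, contradicting very-ampleness. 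Your proposal contains none of this second branch, and without it the proof does not close.
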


Let $X$ be a Fano threefold with at worst Gorenstein terminal singularities.
 Recall that the number 
\[
\iota(X):=\max \{ i\in \ZZ \mid {-}K_X\sim i A,\ A\in \Pic(X) \}
\]
 is called the \textit{Fano index} of $X$. 
The integer $g=\g(X)$ such that ${-}K_X^3=2g-2$ is called the \textit{genus} of $X$.
It is easy to see that $\dim |{-}K_X|=g+1$ \cite[Corollary 2.1.14]{Iskovskikh-Prokhorov-1999}.
In particular, $|{-}K_X|\neq\emptyset$.

\par\medskip
\noindent{\bf Notation.} %\label{notation-Gorenstein-S}
Throughout this section $G$ denotes a $2$-elementary abelian group and 
$X$ denotes a Gorenstein $G$-Fano threefold.
There exists an invariant member $S\in |{-}K_X|$
(see \ref{definition-S}). 
We write $S=\sum_{i=1}^N S_i$, where the $S_i$ are irreducible components.
Let $G_{\bullet}\subset G$ be the kernel of the homomorphism $G\to \Sym_N$
induced by the action of $G$ on $\{S_1,\dots,S_N\}$.
Since $G$ is abelian and the action of $G$ on $\{S_1,\dots,S_N\}$ is transitive, 
the group $G_{\bullet}$ coincides with the stabilizer of any $S_i$.
Clearly, $N=2^{\rr(G)-\rr(G_{\bullet})}$.
If $\rr(G)\ge 6$, then by Proposition \ref{lemma-hyp-sect} we have 
$\rr(G_{\bullet})\le 5$ and so $N\ge 2^{\rr(G)-5}$.

\begin{llemma}\label{lifting-SL}
Let $G\subset \Aut(\PP^n)$ be a $2$-elementary subgroup and $n$ is even.
Then $G$ is conjugate to a diagonal subgroup.
In particular, $\rr(G)\le n$.
\end{llemma}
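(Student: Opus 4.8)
The plan is to lift $G$ from $\Aut(\PP^n)=\PGL_{n+1}(\Bbbk)$ to $\GL_{n+1}(\Bbbk)$ and observe that the only obstruction to simultaneous diagonalizability of such a lift is a $\mu_2$-valued alternating form on $G$. This form is forced to be trivial when $n+1$ is odd, and once the lift is diagonalizable both assertions follow, the second from a computation of the $2$-torsion of the diagonal torus.

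Concretely, I would first choose, for each element of a minimal generating set $g_1,\dots,g_r$ of $G$, a preimage $\tilde g_i\in\GL_{n+1}(\Bbbk)$; since $\tilde g_i^2$ is a scalar matrix, after multiplying $\tilde g_i$ by a square root of that scalar we may assume $\tilde g_i^2=\mathrm{id}$. For any two indices $i,j$ the commutator $[\tilde g_i,\tilde g_j]$ is a scalar matrix $\lambda\cdot\mathrm{id}$, because $g_i$ and $g_j$ commute in $G$; and conjugating the relation $\tilde g_i\tilde g_j=\lambda\,\tilde g_j\tilde g_i$ by $\tilde g_i$ twice, using $\tilde g_i^2=\tilde g_j^2=\mathrm{id}$, returns $\tilde g_j$ to itself up to the factor $\lambda^{-2}$, so $\lambda^2=1$ and $[\tilde g_i,\tilde g_j]=\pm\mathrm{id}$. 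If $[\tilde g_i,\tilde g_j]=\mathrm{id}$ for all $i,j$, then $\hat G:=\langle\tilde g_1,\dots,\tilde g_r\rangle\subset\GL_{n+1}(\Bbbk)$ is a finite abelian group; its elements have finite order, hence are semisimple, hence simultaneously diagonalizable, so $G$ is conjugate to a subgroup of the image of the diagonal torus. That image is $(\Bbbk^*)^{n+1}$ modulo diagonal scalars, i.e.\ $(\Bbbk^*)^n$, whose $2$-torsion subgroup is $(\ZZ/2\ZZ)^n$; since $G$ is $2$-elementary this yields $\rr(G)\le n$.

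It remains to exclude $[\tilde g_i,\tilde g_j]=-\mathrm{id}$ when $n$ is even, and this is the step I expect to be the crux. Assume $\tilde g\tilde h=-\tilde h\tilde g$ with $\tilde g^2=\tilde h^2=\mathrm{id}$, and decompose $\Bbbk^{n+1}=V_+\oplus V_-$ into the $(+1)$- and $(-1)$-eigenspaces of $\tilde g$. The anticommutation relation forces $\tilde h$ to interchange $V_+$ and $V_-$, and since $\tilde h$ is an involution it does so isomorphically; hence $\dim V_+=\dim V_-$ and $n+1$ is even, contrary to hypothesis. Thus $n$ even forces all the commutators $[\tilde g_i,\tilde g_j]$ to be trivial, and we are in the diagonalizable case above. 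The only additional point requiring attention is that the rescalings in the first step can be made coherently — equivalently, that $(g,h)\mapsto[\tilde g,\tilde h]$ descends to a well-defined alternating bilinear form $G\times G\to\mu_2$ independent of the chosen lifts — but this is routine and does not interact with the parity argument.
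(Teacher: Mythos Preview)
Your argument is correct, but it differs from the paper's route. The paper lifts $G$ into $SL_{n+1}(\Bbbk)$ rather than $\GL_{n+1}(\Bbbk)$: the preimage $G^\sharp\subset SL_{n+1}(\Bbbk)$ sits in a central extension $1\to\mu_{n+1}\to G^\sharp\to G\to 1$, and since $n+1$ is odd the kernel $\mu_{n+1}$ has odd order. A Sylow $2$-subgroup $G'\subset G^\sharp$ therefore meets $\mu_{n+1}$ trivially and maps isomorphically onto $G$; being isomorphic to the $2$-elementary abelian group $G$, it is abelian, hence simultaneously diagonalizable. This avoids entirely the commutator computation and the eigenspace-swapping parity argument you give. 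Your approach is more explicit and makes transparent \emph{why} parity enters (anticommutation $\tilde g\tilde h=-\tilde h\tilde g$ forces $n+1$ even), and it generalizes cleanly to the statement that the obstruction to lifting is an alternating $\mu_2$-form; the paper's Sylow trick is shorter and sidesteps the commutator analysis altogether by exploiting that the center of $SL_{n+1}$ is coprime to $|G|$. Your final remark about the coherence of the rescalings is not actually needed: once each $\tilde g_i^{\,2}=\mathrm{id}$, the pairwise commutators are insensitive to the remaining sign ambiguity, and showing they all equal $+\mathrm{id}$ already makes $\langle\tilde g_1,\dots,\tilde g_r\rangle$ abelian.
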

\begin{proof}
Let $G^\sharp\subset SL_{n+1}(\Bbbk)$ be the lifting of $G$ and let 
$G'\subset G^\sharp$ be a Sylow $2$-subgroup. Then $G'\simeq G$.
Since $G'$ is abelian, the representation $G'\hookrightarrow SL_{n+1}(\Bbbk)$
is diagonalizable. 
\hfill$\Box$ \end{proof}

\begin{ccorollary}\label{quadric}
Let $Q\subset \PP^4$ be a quadric and let $G\subset \Aut(Q)$ be a $2$-elementary subgroup.
Then $\rr(G)\le 4$.
\end{ccorollary}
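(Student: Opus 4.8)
The plan is to reduce to the case of a quadric cone and then use a projection. First I would observe that $Q \subset \PP^4$ has at worst an isolated singular point, so by Lemma \ref{lemma-fixed-point} we may as well analyse the two cases separately: $Q$ smooth and $Q$ a cone with vertex $v$. If $Q$ is smooth, then $\Aut(Q) \cong \PO_5(\Bbbk)$, and a $2$-elementary abelian subgroup lifts (after passing to a Sylow $2$-subgroup as in Lemma \ref{lifting-SL}) to a diagonalizable subgroup of $\SO_5(\Bbbk) \cong \Sp_4(\Bbbk)/\{\pm1\}$; the simultaneous eigenspace decomposition of $\Bbbk^5$ under such a group has at most $5$ summands, but since the quadratic form is nondegenerate and $G$-invariant the eigenspaces must pair up by the form except for at most one self-paired line, which forces $\rr(G) \le 4$ directly, or more cleanly one invokes the standard fact that a maximal $2$-elementary abelian subgroup of $\PO_5$ has rank $4$.

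If $Q$ is a quadric cone with vertex $v$, then $v$ is the unique singular point, hence $G$-fixed, and the linear projection from $v$ realizes $Q$ as a $G$-equivariant cone over a smooth quadric surface $Q_0 \cong \PP^1 \times \PP^1 \subset \PP^3$. The action of $G$ on $Q_0$ factors through $\Aut(\PP^1 \times \PP^1)$, whose connected component is $\PGL_2 \times \PGL_2$, and a $2$-elementary abelian subgroup of the full automorphism group (allowing the swap) has rank at most $2 + 2 = 4$: each factor contributes at most $2$ by the classification of finite subgroups of $\PGL_2(\Bbbk)$, and the possible extra swap involution does not increase the rank of an abelian group once the two $\PP^1$-factors are already acted on. The kernel of $G \to \Aut(Q_0)$ consists of homotheties fixing $Q_0$ pointwise, hence is trivial, so $\rr(G) \le 4$ in this case as well.

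The main obstacle I expect is bookkeeping in the cone case: one must be careful that the induced subgroup of $\Aut(\PP^1 \times \PP^1)$ is still $2$-elementary abelian (it is, being a quotient of $G$ by the trivial kernel), and that the possible coordinate swap of the two rulings is accounted for correctly — for an elementary abelian $2$-group mapping onto a subgroup of $(\PGL_2 \times \PGL_2) \rtimes \ZZ/2\ZZ$, the image in $\ZZ/2\ZZ$ costs at most one generator but then the part landing in one $\PP^1$-factor is determined by the part in the other, so the total rank is still $\le 4$. A slicker uniform argument, which I would prefer if it goes through, is simply: the vertex (if any) is fixed, so in all cases $G$ lifts to $\GL_5(\Bbbk)$ preserving the quadratic form $q$ defining $Q$; diagonalizing a Sylow $2$-subgroup, $q$ becomes a $G$-invariant diagonal quadratic form, and a diagonal $2$-elementary abelian group preserving such a form on a $5$-dimensional space, acting faithfully on the quadric, has rank at most $4$ — this follows because the generators act by sign changes $\epsilon_i = \pm 1$ on coordinates, invariance of $q$ imposes no constraint but faithfulness on $Q$ (mod scalars) cuts the rank of the diagonal torus's $2$-torsion from $5$ down to $4$. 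This last step is the only genuinely computational point and is entirely routine.
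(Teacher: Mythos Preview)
Your ``slicker uniform argument'' at the end \emph{is} the paper's proof: since every automorphism of $Q$ extends to $\PP^4$, one has $G\subset\Aut(Q)\subset\Aut(\PP^4)$, and Lemma~\ref{lifting-SL} with $n=4$ even gives $\rr(G)\le 4$ in one line. The corollary carries no separate proof in the paper precisely because this suffices.

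Your longer case-by-case route, however, has a real gap in the cone case. The kernel of $G\to\Aut(Q_0)$ is \emph{not} trivial: in coordinates with $v=[0{:}0{:}0{:}0{:}1]$ and $Q_0\subset\{x_4=0\}$, the involution $x_4\mapsto -x_4$ fixes $Q_0$ pointwise but acts nontrivially on $Q$, sliding points along the rulings through $v$. So the kernel can contribute a $\ZZ/2\ZZ$, and combined with your bound of $4$ on the image in $\Aut(\PP^1\times\PP^1)$ you only obtain $\rr(G)\le 5$. Ironically, the cone case is disposed of instantly by the lemma you invoke in your opening sentence: the vertex $v$ is a $G$-fixed ordinary double point, hence terminal Gorenstein with $\dim T_{v,Q}=4$, and Lemma~\ref{lemma-fixed-point} gives $\rr(G)\le 4$ directly. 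The moral is that the uniform argument via Lemma~\ref{lifting-SL} is both shorter and sidesteps exactly the bookkeeping you flagged as the main obstacle.
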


\begin{llemma}\label{P3}
Let $G\subset \Aut(\PP^3)$ be a $2$-elementary subgroup.
Then $\rr(G)\le 4$.
\end{llemma}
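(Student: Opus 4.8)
The plan is to mimic the proof of Lemma \ref{lifting-SL}, but to handle the parity obstruction that arises because $n=3$ is odd. First I would lift $G$ to a subgroup $G^\sharp\subset GL_4(\Bbbk)$ under the central extension $1\to\Bbbk^*\to GL_4(\Bbbk)\to\PGL_4(\Bbbk)\to 1$. Since $G$ is a $2$-group, it suffices to control the preimage; the obstruction to splitting the extension over $G$ lives in $H^2(G,\Bbbk^*)$, and in general $G^\sharp$ need not be abelian. Concretely, $[G^\sharp,G^\sharp]\subset\Bbbk^*$, so $G^\sharp$ is a central extension of $G$ by a cyclic $2$-group (we may take scalars to be roots of unity of $2$-power order after restricting to a Sylow $2$-subgroup of the preimage), hence a group of class at most $2$ and of exponent dividing $4$.

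Next I would diagonalise as far as possible. Decompose $\Bbbk^4$ into isotypic components for the abelian normal subgroup $\langle G^\sharp, \text{scalars}\rangle$ — equivalently, restrict to a maximal abelian subgroup $A\subset G^\sharp$ of index $2$ (if $G^\sharp$ is non-abelian) and decompose $\Bbbk^4$ into the four (or fewer, with multiplicity) one-dimensional eigenspaces of $A$. The complementary involution in $G^\sharp/A$ permutes these eigenspaces. The key point is a dimension/parity count: if $G^\sharp$ is non-abelian, then, exactly as in the proof of Lemma \ref{lemma-fixed-point}, the eigenspaces of the central cyclic subgroup come in equal-sized blocks that are permuted, and one tracks how a $4$-dimensional space can be organised under a $2$-group action. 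Then $\rr(G)\le\rr(G^\sharp)$ bounds the rank: in the abelian case $\rr(G^\sharp)\le 4$ directly, and in the non-abelian case the permutation action forces the free diagonal part to have rank $\le 3$ while the extra involution contributes $1$, still giving $\rr(G)\le 4$. I would also invoke Corollary \ref{quadric} as a guide: a smooth quadric $Q\subset\PP^4$ is closely related to $\PP^1\times\PP^1\hookrightarrow\PP^3$-type geometry, and the bound $4$ there is consistent.

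Alternatively, and perhaps more cleanly, I would argue via the Heisenberg group: a non-abelian $2$-elementary-by-central subgroup of $\PGL_4$ of maximal rank is conjugate to (a subgroup of) the Heisenberg group $H_4$ of level $4$ acting on $\Bbbk^4$ by the Schrödinger representation, whose image in $\PGL_4$ is $(\ZZ/4)^2$ — which is \emph{not} $2$-elementary, ruling this case out and forcing $G^\sharp$ abelian, whence $\rr(G)\le 4$. The main obstacle will be exactly this non-splitting subtlety: one must rule out (or carefully bound) the non-abelian liftings, since unlike the $n$ even case of Lemma \ref{lifting-SL} one cannot simply lift into $SL_4$ and diagonalise — a Sylow $2$-subgroup of the $SL_4$-lifting can still be non-abelian because $\mu_4\subset SL_4$. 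Handling the $\langle -\mathrm{Id}\rangle$ ambiguity and the resulting central $\ZZ/4$ carefully is where the real work lies; everything else is the same eigenspace-permutation bookkeeping already used in Lemma \ref{lemma-fixed-point}.
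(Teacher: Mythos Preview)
Your approach is representation-theoretic; the paper's is geometric and quite different. The paper assumes $\rr(G)\ge 5$ and derives a contradiction from fixed loci: by Lemma~\ref{lemma-fixed-point} the group $G$ has no fixed point on $\PP^3$, so for any $\delta\in G\setminus\{1\}$ the $G$-invariant set $\Fix(\delta)$ must consist of two skew lines $L_1\cup L_2$ (the alternative, a point plus a complementary plane, would produce a $G$-fixed point). Passing to the stabiliser of $L_1$ and then of a point $P\in L_1$ yields a subgroup $G_2\subset G$ with $\rr(G_2)\ge 3$; the complementary invariant plane $\Pi$ carries a $G_2$-action, and Lemma~\ref{lifting-SL} (applied with $n=2$) supplies $\delta'\in G_2$ fixing $\Pi$ pointwise. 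Then $\Fix(\delta')$ is a plane together with an isolated point, and that isolated point is $G$-fixed --- contradiction.

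Your Heisenberg alternative contains a genuine error. The relevant group is not the level-$4$ Heisenberg group but the extraspecial $2$-group $2^{1+4}$: it has a faithful irreducible $4$-dimensional representation, and its image in $\PGL_4(\Bbbk)$ is exactly $(\ZZ/2\ZZ)^4$, \emph{not} $(\ZZ/4\ZZ)^2$. Thus the non-abelian lift is not ruled out; it is precisely how the sharp value $\rr(G)=4$ is realised (equivalently via $(\ZZ/2\ZZ)^2\times(\ZZ/2\ZZ)^2\subset\PGL_2(\Bbbk)\times\PGL_2(\Bbbk)\hookrightarrow\PGL_4(\Bbbk)$ through the Segre quadric). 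Your first outline is also imprecise: when $G^\sharp$ is abelian it is diagonalisable, so $G$ lies in the diagonal torus of $\PGL_4(\Bbbk)$ and in fact $\rr(G)\le 3$, not $4$; and the promised bookkeeping (``free diagonal part of rank $\le 3$ plus one extra involution'') is not an argument as it stands. A correct version would analyse the alternating commutator pairing $G\times G\to\{\pm 1\}$ induced by any lift, note that its non-degenerate quotient has rank $2m$ with $2^m\mid 4$, and bound the radical separately --- but that is not what you wrote.
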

\begin{proof}
Assume that $\rr(G)\ge 5$. Take any element $\delta \in G\setminus \{1\}$.
By Lemma \ref{lemma-fixed-point} the group $G$ has no fixed points.
Since the set $\Fix(\delta)$ is $G$-invariant, 
$\Fix(\delta)=L_1\cup L_2$, where $L_1,\, L_2\subset \PP^3$ 
are skew lines.

Let $G_1\subset G$ be the stabilizer of $L_1$. There is a subgroup 
$G_2\subset G_1$ of index $2$ having a fixed point $P\in L_1$.
Thus $\rr(G_2)\ge 3$ and the ``orthogonal'' 
plane $\Pi$ is $G_2$-invariant. By Lemma \ref{lifting-SL} there exists an element
$\delta'\in G_2$ that acts trivially on $\Pi$, i.e. $\Pi\subset \Fix(\delta')$. 
But then $\delta'$ has a fixed point, a contradiction.
\hfill$\Box$ \end{proof}

\begin{llemma}\label{lemma-Bs}
If $\Bs|{-}K_X|\neq \emptyset$, then $\rr(G)\le 4$. 
\end{llemma}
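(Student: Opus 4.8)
The plan is to use the fact that the anticanonical system is $G$-invariant together with the known structure of its base locus. Since $X$ is Gorenstein, $-K_X$ is Cartier and $K_X$ is canonical, so $|{-}K_X|$ is $G$-invariant and hence $\Bs|{-}K_X|$ is a $G$-invariant closed subset, non-empty by hypothesis. Recall from the classification of Gorenstein Fano threefolds with terminal singularities (see \cite{Iskovskikh-Prokhorov-1999}) that if $|{-}K_X|$ fails to be base-point free then $\Bs|{-}K_X|$ is either a single (reduced) point or a rational curve; in particular $X$ is necessarily singular here. I treat the two cases separately.

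Suppose first $\Bs|{-}K_X|=\{P\}$. Then $P$ is a $G$-fixed point. If $P$ is a smooth point of $X$, the faithful action of $G$ on $T_{P,X}\cong\Bbbk^3$ is diagonalizable, so $\rr(G)\le 3$; if $P$ is a (terminal, Gorenstein) singular point, then Lemma \ref{lemma-fixed-point} applied to the germ $(X\ni P)$ gives $\rr(G)\le 4$. Either way $\rr(G)\le 4$. Suppose now $\Bs|{-}K_X|=C$ is a $G$-invariant rational curve, $C\simeq\PP^1$. Let $G_C\subset G$ be the kernel of $G\to\Aut(C)$; it fixes $C$ pointwise, so $\dim\Fix(G_C)\ge 1$, and Lemma \ref{lemma-fixed-points}(i) yields $\rr(G_C)\le 2$. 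The image $G/G_C$ is a $2$-elementary abelian subgroup of $PGL_2(\Bbbk)$, hence of rank $\le 2$ by the classification of finite subgroups of $PGL_2(\Bbbk)$. Therefore $\rr(G)=\rr(G_C)+\rr(G/G_C)\le 4$.

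The one point that needs care — and the main obstacle — is the structural input: one must be certain that a non-empty $\Bs|{-}K_X|$ cannot be a zero-dimensional scheme of length $>1$ (a pair of swapped points would a priori only give $\rr(G)\le 5$ via the stabilizer argument) nor a curve of positive genus, and that in the curve case it really is $\PP^1$. All of this is part of the standard analysis of $|{-}K_X|$ for Gorenstein Fano threefolds with terminal singularities; granting it, the bound $\rr(G)\le 4$ follows as above. Should the write-up instead proceed by identifying the finitely many $X$ with $\Bs|{-}K_X|\ne\emptyset$ explicitly, one can alternatively conclude in each case from Lemma \ref{P3}, Corollary \ref{quadric}, Lemma \ref{lifting-SL}, and Theorem \ref{theorem-cr2}, but the fixed-point argument above seems the most economical.
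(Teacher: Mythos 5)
Your argument is correct and is essentially the paper's proof: the paper likewise invokes the structural fact (citing Shin \cite{Shin1989}) that $\Bs|{-}K_X|$ is a single point or a rational curve, and then concludes $\rr(G)\le 4$ via Lemma \ref{lemma-fixed-point} in the point case and Lemma \ref{lemma-fixed-points} (plus the $PGL_2$ bound) in the curve case, exactly as you do. The structural input you flag as the ``main obstacle'' is precisely what the citation to \cite{Shin1989} supplies, so there is no gap.
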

\begin{proof}
By \cite{Shin1989} the base locus $\Bs|{-}K_X|$ is either a single point 
or a rational curve. In both cases $\rr(G)\le 4$ by
Lemmas \ref{lemma-fixed-point} and \ref{lemma-fixed-points}.
\hfill$\Box$ \end{proof}

\begin{llemma}\label{lemma-base-point-free}
If ${-}K_X$ is not very ample, then $\rr(G)\le 5$. 
\end{llemma}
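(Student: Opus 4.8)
The plan is to use the classification of Gorenstein Fano threefolds with non-very-ample anticanonical divisor, which goes back to Iskovskikh (and Shin in the singular case): if $-K_X$ is base-point-free (which we may assume by Lemma \ref{lemma-Bs}, since otherwise $\rr(G)\le 4$) but not very ample, then the anticanonical morphism $\varphi_{|{-}K_X|}\colon X\to \PP^{g+1}$ is either a double cover of $\PP^3$ (when $g=2$, so $-K_X^3=2$) or a double cover of a quadric cone $Q\subset \PP^4$ (when $g=3$, so $-K_X^3=4$), or $X$ maps two-to-one onto its image in a few other low-genus cases. So first I would invoke this classification to reduce to these concrete situations.

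In the case of a double cover $\theta\colon X\to \PP^3$ branched along a sextic, the covering involution $\tau$ is central in $\Aut(X)$, and $\Aut(X)/\langle\tau\rangle$ embeds into $\Aut(\PP^3)$. Since $G$ is $2$-elementary abelian, either $\tau\in G$ — in which case $G/\langle\tau\rangle\hookrightarrow\Aut(\PP^3)$ has rank $\le 4$ by Lemma \ref{P3}, giving $\rr(G)\le 5$ — or $\tau\notin G$, in which case $G$ itself embeds into $\Aut(\PP^3)$ and $\rr(G)\le 4$ by Lemma \ref{P3}. Either way $\rr(G)\le 5$. The double cover of the quadric cone is handled the same way using Corollary \ref{quadric}: $G/\langle\tau\rangle$ (or $G$) embeds into $\Aut(Q)$ for the quadric cone $Q$, and one checks the rank bound $\le 4$ for $\Aut$ of a (possibly singular) quadric threefold in $\PP^4$ — Corollary \ref{quadric} as stated covers the quadric, and the cone can be treated by passing to the vertex, which is a fixed point, so Lemma \ref{lemma-fixed-point} applies directly. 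In the remaining small cases of the classification, $-K_X$ very ample fails only for $X$ with $\rho(X)>1$ or $X=\PP^1\times\PP^1\times\PP^1$-type configurations, and there one either finds $G$ acting on a $\PP^1$- or quadric factor or applies the earlier lemmas to conclude $\rr(G)\le 5$.

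The main obstacle I expect is bookkeeping the full list from Shin's and Iskovskikh's classifications in the singular Gorenstein case: one must be sure that "not very ample" with "base-point-free" really forces a morphism that is generically $2$-to-$1$ onto $\PP^3$, a quadric (possibly a cone), or onto a small number of other explicitly understood varieties, and that in every branch the target $Z$ has $\Aut(Z)$ whose $2$-elementary subgroups have rank $\le 4$. The covering involution argument then uniformly upgrades this to $\rr(G)\le 5$. A secondary subtlety is the quadric cone: Corollary \ref{quadric} literally addresses a smooth quadric, so for the cone I would argue separately that the vertex is $G$-fixed (being the unique singular point, hence $\Aut(Q)$-invariant) and invoke Lemma \ref{lemma-fixed-point} to get $\rr(G)\le 4$ there, which is even stronger than needed.
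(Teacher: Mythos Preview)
Your argument for $g\le 3$ matches the paper's: pass to the image $\bar G\subset\Aut(Y)$ of $G$ under the double cover, use $\rr(\bar G)\ge\rr(G)-1$, and apply Lemma~\ref{P3} or Corollary~\ref{quadric}. (Incidentally, Corollary~\ref{quadric} already covers singular quadrics, since it follows from $\Aut(Q)\subset\PGL_5(\Bbbk)$ and Lemma~\ref{lifting-SL}; your separate treatment of the cone vertex is unnecessary, and in any case Lemma~\ref{lemma-fixed-point} is about terminal points of $X$, not of $Q$.)

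The genuine gap is for $g\ge 4$. It is not true that the remaining hyperelliptic cases are confined to ``a few other low-genus cases'' or to $\rho(X)>1$. When $|{-}K_X|$ is free but not very ample it defines a double cover $\phi\colon X\to Y\subset\PP^{g+1}$ onto a variety of minimal degree $g-1$; for $g\ge 4$ the Enriques theorem says $Y$ is a rational normal scroll $\PP_{\PP^1}(\EEE)$ or a cone. Such $X$ do occur among terminal Gorenstein Fano threefolds (we are not in the smooth classification here), and nothing you wrote bounds $\rr(\bar G)$ for $\bar G\subset\Aut(Y)$ in these cases. The paper disposes of them one by one: if $Y$ is a smooth scroll, the composition with the scroll projection gives a $G$-equivariant map $X\to\PP^1$, contradicting $\rho(X)^G=1$; if $Y$ is a cone with zero-dimensional vertex $O$, then $\dim T_{O,Y}\ge 5$ forces the double cover to be ramified over $O$, producing a $G$-fixed point on $X$ and contradicting Lemma~\ref{lemma-fixed-point}; if the vertex of $Y$ is a line $L$, the same tangent-space count puts $L$ in the branch divisor, so $\phi^{-1}(L)$ is a $G$-invariant rational curve and $\rr(G)\le 4$ by Lemma~\ref{lemma-fixed-points}. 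Without these three sub-arguments your proof is incomplete.
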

\begin{proof}
Assume that $\rr(G)\ge 6$. By Lemma \ref{lemma-Bs} 
the linear system $|{-}K_X|$ is base point free.
It is easy to show that $|{-}K_X|$ defines a double cover 
$\phi: X\to Y\subset \PP^{g+1}$ (cf. \cite[Ch. 1, Prop. 4.9]{Iskovskikh-1980-Anticanonical}).
Here $Y$ is a variety of degree $g-1$ in $\PP^{g+1}$,
a variety of minimal degree. Let $\bar G$ be the image of $G$ in $\Aut(Y)$.
Then $\rr(\bar G)\ge \rr(G)-1$.
If $g=2$ (resp. $g=3$), then $Y=\PP^3$ (resp. $Y\subset \PP^4$ is a quadric) 
and $\rr(G)\le 5$ by Lemma \ref{P3}
(resp. by Corollary \ref{quadric}).
Thus we may assume that $g\ge 4$. 
If $Y$ is smooth, then according to the Enriques theorem 
(see, e.g., \cite[Th. 3.11]{Iskovskikh-1980-Anticanonical})
$Y$ is a rational scroll
$\PP_{\PP^1}(\EEE)$, where 
$\EEE$ is a rank $3$ vector bundle on $\PP^1$.
Then $X$ has a $G$-equivariant projection to a curve.
This contradicts $\rho(X)^G=1$.
Hence $Y$ is singular. In this case, $Y$ is a projective cone (again by the Enriques theorem).
If its vertex $O\in Y$ is zero-dimensional, then $\dim T_{O,Y}\ge 5$.
On the other hand, $X$ has only hypersurface singularities.
Therefore the double cover $X\to Y$ is not \'etale over $O$ and so
$G$ has a fixed point 
on $X$. This contradicts Lemma \ref{lemma-fixed-point}.
Thus $Y$ is a cone over a curve 
with vertex along a line $L$. 
As above, $L$ must be contained in the branch divisor
and so $L':=\phi^{-1}(L)$ is a $G$-invariant rational curve.
Since the image of $G$ in $\Aut(L')$ is a $2$-elementary abelian group
of rank $\le 2$, by Lemma \ref{lemma-fixed-points} we have $\rr(G)\le 4$. 
\hfill$\Box$ \end{proof}

\refstepcounter{theorem}
\begin{remark}
\label{iota}
Recall that for a Fano threefold $X$ with at worst Gorenstein terminal singularities one has
$\iota(X)\le 4$. Moreover, $\iota(X)=4$ if and only if $X\simeq \PP^3$ and 
$\iota(X)=3$ if and only if $X$ is a quadric in $\PP^4$ \cite{Iskovskikh-Prokhorov-1999}.
In these cases
we have $\rr(G)\le 4$ by Lemma \ref{P3} and Corollary \ref{quadric},
respectively.
Assume that $\iota(X)=2$. Then $X$ is so-called 
\emph{del Pezzo threefold}.
Let $A:=-\frac 12 K_X$. The number $d:=A^3$ is called the \textit{degree} of $X$.
\end{remark}

\begin{llemma}\label{corollary-action}
Assume that the divisor ${-}K_X$ is very ample, $\rr(G)\ge 6$, and the action of $G$
on $X$ is not free in codimension $1$.
Let $\delta\in G$ be an element 
such that $\dim \Fix(\delta)=2$ and let $D\subset \Fix(\delta)$ be the union of all two-dimensional components. 
Then $\rr(G)= 6$ and $D$ is a Du Val member of $|{-}K_X|$.
Moreover, $\iota(X)=1$ except, possibly, for the case where $\iota(X)=2$
and ${-}\frac12 K_X$ is not very ample.
\end{llemma}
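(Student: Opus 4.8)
The plan is to study the invariant surface $D$ and bootstrap from the case already treated in Proposition \ref{lemma-hyp-sect}. First I would observe that since $\delta$ acts trivially on $D$, the divisor $D$ is $G$-invariant, and by Lemma \ref{lemma-fixed-points}(ii) it is smooth in codimension $1$. The natural thing is to show $D\in|{-}K_X|$: working with the $\delta$-action on the conormal bundle and comparing with the fact that $\mathrm{Fix}(\delta)$ is smooth where two-dimensional, one gets that $D$ is a Cartier divisor whose class is numerically proportional to $K_X$ (using $\rho(X)^G=1$ applied to the invariant divisor $D$); a degree/Riemann–Roch count, together with the classification of del Pezzo and $\PP^3$-type Fanos, then forces $D\sim{-}K_X$ (for $\iota(X)=1$ the class ${-}K_X$ is primitive, so any invariant effective Weil divisor numerically equivalent to a rational multiple of $K_X$ must be $\sim{-}iK_X$, and $i=1$ by the smallness of $-K_X^3$ forced below).

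Next I would invoke Lemma \ref{lemma-hyp-sect-0} (valid since $\rr(G)\ge 6\ge 5$): applied to $S:=D\in|{-}K_X|$, with $-(K_X+D)\sim 0$ nef, it gives that $(X,D)$ is lc, hence $D$ is reduced. Then Proposition \ref{lemma-hyp-sect} applies to $D$: either $D$ is a K3 surface with Du Val singularities (case (i), forcing $\rr(G)=6$ directly), or $D$ is reducible and $G$ permutes its components transitively. In the reducible case I would rule out the possibility that $\rr(G)\ge 7$ — this is exactly where Corollary \ref{corollary-lemma-K3} and the rank bounds $\rr(G_\bullet)\le 4$ or $\le 5$ from Proposition \ref{lemma-hyp-sect} combine with $N=2^{\rr(G)-\rr(G_\bullet)}$; but more to the point, in the reducible case $\delta$ acts trivially on an entire component $D_1$, so $D_1\subset\mathrm{Fix}(\delta)$, and one argues on the stabilizer $G_\bullet$ of $D_1$ acting on $D_1$ with a $2$-plane worth of fixed points, using Lemma \ref{lemma-fixed-points}(i) on $D_1$ to squeeze $\rr(G_\bullet)$, and hence $\rr(G)$, down to $6$. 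In all cases $\rr(G)=6$.

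For the Du Val assertion and the statement about $\iota(X)$: once $D\in|{-}K_X|$ is known to be reduced and (by the argument above) either K3 with Du Val singularities or — in the reducible subcase, which I expect to be eliminable under the running hypothesis that $G$ acts non-freely in codimension one with a two-dimensional fixed locus — I would show $D$ is in fact irreducible here, since a fixed component $D_1$ of $\delta$ in $|{-}K_X|$ which is a proper summand would contradict Corollary \ref{corollary-hyp-sect-0} applied to $D_1<D$. Hence $D$ is the irreducible K3 case (i), so $D$ is Du Val. Finally, for $\iota(X)$: if $\iota(X)=4$ or $3$ then $X\simeq\PP^3$ or a quadric and $\rr(G)\le 4$ by Lemma \ref{P3} and Corollary \ref{quadric}, contradicting $\rr(G)\ge 6$; if $\iota(X)=2$ with $A:=-\tfrac12 K_X$ very ample, then the linear system $|A|$ embeds $X$ and $D\in|2A|$ is a Du Val K3, but one checks (degree count on $d=A^3$, with the classification of del Pezzo threefolds and the existence of the invariant $2$-plane of $\delta$-fixed points inside a quadric section) that this already forces a fixed point of a subgroup of rank $\ge 4$, contradicting Lemma \ref{lemma-fixed-point}; hence $\iota(X)=1$ unless $\iota(X)=2$ and $-\tfrac12 K_X$ fails to be very ample.

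The main obstacle I anticipate is the first step — proving cleanly that $D\in|{-}K_X|$ rather than merely $D\equiv{-}iK_X$ for some $i\ge 1$ — because this requires ruling out, via Riemann–Roch and the Fano-index classification, the higher-index del Pezzo and $\PP^3$/quadric cases simultaneously with controlling the local $\delta$-action along $D$; the rest is a bookkeeping assembly of Lemmas \ref{lemma-fixed-point}, \ref{lemma-fixed-points}, \ref{lemma-hyp-sect-0}, Corollaries \ref{corollary-hyp-sect-0}, \ref{corollary-lemma-K3}, and Propositions \ref{lemma-hyp-sect}, \ref{P3}, \ref{quadric}.
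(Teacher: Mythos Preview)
Your overall architecture is right --- establish $D\in|{-}K_X|$, then feed into Proposition~\ref{lemma-hyp-sect} --- but you are missing the key idea that makes the first step work, and you yourself flag this as ``the main obstacle''. Your proposed route (conormal bundle, Riemann--Roch, classification) does not give an \emph{upper} bound on the multiple: from $\rho(X)^G=1$ you only get $D\equiv i\cdot(-K_X)$ for some positive integer $i$ (or $i/2$ if $\iota(X)=2$), and nothing you wrote forces $i$ small. The paper's observation is elementary and decisive: since $-K_X$ is very ample, $\delta$ acts on $X\subset\PP^{g+1}$ via a linear involution, whose fixed locus is a disjoint union of two linear subspaces $V_+\cup V_-$. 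Hence $D$ lies in a hyperplane, so $D$ is dominated by some $S\in|{-}K_X|$. Combined with Corollary~\ref{corollary-hyp-sect-0} (if $D\lneq S$ then $-(K_X+D)$ would be ample), this gives $D\sim -K_X$ immediately. The same trick disposes of the $\iota(X)=2$, $A$ very ample case: under the embedding by $|A|$ the divisor $D\in|2A|$ spans the ambient $\PP^N$ (it is not contained in any $|A|$-hyperplane), yet $D\subset\Fix(\delta)$ lies in an eigenspace of the linear involution $\delta$, forcing $\delta$ trivial. Your alternative for this step (degree count plus a fixed point of a rank $\ge 4$ subgroup) is not fleshed out and looks harder to make precise.

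A secondary point: the paper gets irreducibility of $D$ before invoking Proposition~\ref{lemma-hyp-sect}, not after. Since $D$ is ample Cartier and smooth outside the finite set $\Sing(X)$, it is connected, normal, and hence irreducible; then Proposition~\ref{lemma-hyp-sect}(i) applies directly and gives both the Du Val property and $\rr(G)=6$. Your detour through the reducible case (ii) and the argument about $G_\bullet$ acting on a component with a two-dimensional fixed locus is unnecessary once irreducibility is in hand.
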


\begin{proof}
Since $G$ is abelian, $\Fix(\delta)$ and $D$ are $G$-invariant and so
${-}K_X\qq \lambda D$ for some $\lambda \in \QQ$. 
In particular, $D$ is a $\QQ$-Cartier divisor.
Since $X$ has only terminal Gorenstein singularities, $D$ must be Cartier.
Clearly, $D$ is smooth outside of $\Sing(X)$.
Further, 
 $D$ is ample and 
 so it must be connected.
 Since $D$ is a reduced Cohen-Macaulay scheme with $\dim \Sing (D)\le 0$, 
 it is irreducible and normal.

Let $X\hookrightarrow \PP^{g+1}$ the anticanonical embedding. 
The action of $\delta$ on $X$ is induced by an action of a linear involution of $\PP^{g+1}$.
There are two disjointed linear subspaces $V_+,\, V_-\subset \PP^{g+1}$
of $\delta$-fixed points and the divisor $D$ is contained in one of them.
This means that $D$ is a component of a hyperplane section $S\in |{-}K_X|$
and so $\lambda \ge 1$.
Since $\rr(G)\ge 6$, by Corollary
\ref{corollary-hyp-sect-0} we have $\lambda =1$ and ${-}K_X\sim D$
(because $\Pic(X)$ is a torsion free group).
Since $D$ is irreducible, the case (i) of Proposition \ref{lemma-hyp-sect}
holds.

Finally, if $\iota(X)>1$, then by Remark \ref{iota} we have 
$\iota(X)=2$.
If furthermore the divisor $A$ is very ample,
then it defines an embedding 
$X \hookrightarrow \PP^N$ so that $D$ spans $\PP^N$. In this case the action of $\delta$ must be trivial,
a contradiction.
\hfill$\Box$ \end{proof}

\begin{llemma}\label{lemma-rho234}
If $\rho(X)>1$, then $\rr(G)\le 5$.
\end{llemma}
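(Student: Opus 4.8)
The plan is to suppose $\rr(G)\ge 6$ and reach a contradiction. By Lemma~\ref{lemma-base-point-free} we may assume ${-}K_X$ is very ample, so $X\subset\PP^{g+1}$ is anticanonically embedded, and by Remark~\ref{iota} we may assume $\iota(X)\le 2$. The first step is the observation that \emph{no extremal ray of $\overline{NE}(X)$ is $G$-invariant}: if $R$ were such a ray, then (using that $X$ is $G\QQ$-factorial) its contraction $\psi\colon X\to Z$ is $G$-equivariant and either divisorial or of fibre type; a fibre-type $\psi$ with $\dim Z>0$ gives $\rho(X)^G=\rho(Z)^G+1\ge 2$, a fibre-type $\psi$ with $\dim Z=0$ contradicts $\rho(X)\ge 2$, and a divisorial $\psi$ gives $\rho(X)^G\ge\rho(Z)^G+1\ge 2$; in every case this contradicts $\rho(X)^G=1$. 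Hence $G$ permutes the finitely many extremal rays of $\overline{NE}(X)$ in orbits of length $\ge 2$.

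Next I would feed the $G$-invariant member $S\in|{-}K_X|$ of Remark~\ref{definition-S} into Proposition~\ref{lemma-hyp-sect} and Lemma~\ref{corollary-action}. If the $G$-action on $X$ is not free in codimension one, Lemma~\ref{corollary-action} yields $\rr(G)=6$ and an irreducible Du Val member $D\in|{-}K_X|$; since $\rr(G)=6$, Lemma~\ref{lemma-K3} forces the homomorphism $G\to\Aut(D)$ to have a one-dimensional kernel $\langle\delta\rangle$, so $D=\Fix(\delta)$ and $\bar G:=G/\langle\delta\rangle$, of rank $5$, acts faithfully on the Du Val K3 surface $D$. Passing to the minimal resolution $\mu\colon\tilde D\to D$, I would use that $\mu^*$ induces a $\bar G$-equivariant embedding of $\Pic(X)$ into the K3 lattice $\Pic(\tilde D)$ carrying $K_X$ to a $\bar G$-invariant class of positive self-intersection; since $\rho(X)\ge 2$ while $\Pic(X)^{\bar G}=\ZZ K_X$ has rank one, one derives a contradiction from Nikulin's classification of rank-$5$ $2$-elementary abelian groups of K3 automorphisms (as quoted in Lemma~\ref{lemma-K3}) by bounding the ranks of the invariant and co-invariant sublattices of $\bar G$ inside $\Pic(\tilde D)$. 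If instead the $G$-action is free in codimension one, then no $\delta\in G\setminus\{1\}$ has a two-dimensional fixed component, so by Proposition~\ref{lemma-hyp-sect} the divisor $S=\sum_{i=1}^{N}S_i$ is reducible with $G$ transitive on the $N=2^{\rr(G)-\rr(G_{\bullet})}\ge 2$ components and $\rr(G_{\bullet})\le 5$ (and $\le 4$ when the $S_i$ are rational). Here I would split into two subcases. If all classes $[S_i]$ coincide, then ${-}K_X=N[S_i]$ forces $\iota(X)=2$ and $N=2$, so $X$ is a del Pezzo threefold with $\rho(X)\ge 2$, and a direct inspection of $\Aut(X)$ and of its action on $\Pic(X)$ shows that no $2$-elementary abelian $G\subset\Aut(X)$ with $\rr(G)\ge 6$ satisfies $\Pic(X)^G=\ZZ K_X$, a contradiction. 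If the $[S_i]$ are not all equal, they span a $G$-submodule $V\subseteq\Pic(X)_\QQ$ with $V^G=\QQ K_X$, and a contradiction follows from the ampleness of $\sum S_i={-}K_X$, the bound $\rr(G_{\bullet})\le 5$, and the rational or elliptically ruled structure of the components $S_i$.

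The main obstacle is the case in which $X$ carries an ample Du Val K3 member $D=\Fix(\delta)$ of $|{-}K_X|$ with $\rr(G/\langle\delta\rangle)=5$: ruling out $\rho(X)\ge 2$ there needs precise lattice-theoretic input on how a rank-$5$ $2$-elementary abelian group acts on the N\'eron--Severi lattice of a K3 surface — in particular on the rank of its invariant lattice — together with control of the sublattice $\Pic(X)\subset\Pic(\tilde D)$ under restriction. By comparison the reducible-$S$ case is soft, reducing either to the short list of del Pezzo threefolds of degree $\ge 6$ or to an elementary intersection-theoretic estimate on the Fano threefold $X$.
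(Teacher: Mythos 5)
Your route is genuinely different from the paper's, but it is not a proof: the two decisive steps are left as assertions. The paper disposes of $\rho(X)>1$ by quoting the classification of Gorenstein $G$-Fano threefolds with $\rho(X)>1$ from \cite{Prokhorov-GFano-2}: there $\rho(X)\le 4$, and each case is killed by a concrete structure --- for $\rho(X)=2$ a $G_0$-equivariant conic bundle over $\PP^2$ or a pair of contractions to $\PP^3$ and a quadric (then Lemmas \ref{P3}, \ref{quadric} and the fibration argument of Proposition \ref{propositionFano-Mori-fibration}), for $\rho(X)=3$ an arithmetic argument showing that the forced reflection $\mathbf x\mapsto \mathbf x-\lambda K_X$ cannot preserve $\Pic(X)$, and for $\rho(X)=4$ the divisor of multidegree $(1,1,1,1)$ in $(\PP^1)^4$, where the four projections bound $\rr(G_0)\le 2$ and $\rr(G/G_0)\le 2$. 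You try to avoid the classification altogether, and your preliminary observation about extremal rays is correct but is never used afterwards.

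The gaps: (a) In the case $D=\Fix(\delta)$ a Du Val K3 member you yourself call the lattice step the ``main obstacle'', and indeed nothing you cite supplies it. Lemma \ref{lemma-K3} (Nikulin's Theorem 4.5) only bounds the rank of the symplectic part by $4$; it says nothing about the rank of the invariant sublattice of $\Pic(\tilde D)$ for a rank-$5$ group, which is what you would need to contradict $\rho(X)\ge 2$ given $\Pic(X)^G=\ZZ\cdot K_X$. Moreover the injectivity and equivariance of the restriction $\Pic(X)\to\Pic(D)$ for a singular Gorenstein terminal $X$ and a \emph{special} (not general) member $D$ is asserted, not proved. Case (i) of Proposition \ref{lemma-hyp-sect} explicitly allows exactly this configuration with $\rr(G)=6$, so it cannot be excluded by the results already in the paper. (b) In the reducible-$S$, free-in-codimension-one branch your conclusion is ``a contradiction follows from the ampleness of $\sum S_i$, the bound $\rr(G_{\bullet})\le 5$, and the (birationally) ruled structure of the $S_i$'' --- but no contradiction follows from these soft facts alone: Example \ref{example-Cr} exhibits a rank-$6$ group acting on a Fano threefold $F\times\PP^1$ with $\rho=2$, reducible invariant anticanonical members with rational components, and $\rr(G_{\bullet})=4$; the only thing that fails there is $\rho(X)^G=1$, so any argument must exploit $\rho(X)^G=1$ quantitatively (as the paper's case-by-case use of the classification does), which your sketch never does. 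Likewise the ``direct inspection'' of del Pezzo threefolds with $\iota(X)=2$, $\rho(X)\ge 2$ is omitted (and you cannot invoke Lemma \ref{lemma-Gorenstein-index}, which comes later and already assumes this lemma). As it stands the proposal is a plan with the hard points unresolved, not an alternative proof.
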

\begin{proof}
We use the classification of $G$-Fano threefolds with $\rho(X)>1$
\cite{Prokhorov-GFano-2}.
By this classification $\rho(X)\le 4$.
Let $G_0$ be the kernel of the action of $G$ on $\Pic(X)$.

Consider the case $\rho(X)=2$. Then $[G:G_0]=2$.
In the cases (1.2.1) and (1.2.4) of \cite{Prokhorov-GFano-2} the variety $X$ 
has a structure of $G_0$-equivariant conic bundle over $\PP^2$.
As in Proposition \ref{propositionFano-Mori-fibration} we have $\rr(G_0)\le 4$ and $\rr(G)\le 5$ in these cases.
In the cases (1.2.2) and (1.2.3) of \cite{Prokhorov-GFano-2} the variety $X$ 
has two birational contractions to $\PP^3$ and a quadric $Q\subset \PP^4$,
respectively. As above we get $\rr(G)\le 5$ by Lemma
\ref{P3} and Corollary \ref{quadric}.

Consider the case $\rho(X)=3$. 
We show that in this case $\Pic(X)^G\not\simeq \ZZ$
(and so this case does not occur).
Since $G$ is a $2$-elementary abelian group, its action on 
$\Pic(X)\otimes \QQ$ is diagonalizable. 
Since, $\Pic(X)^G=\ZZ\cdot K_X$, the group
$G$ contains an element $\tau$
that acts on $\Pic(X)\simeq \ZZ^3$ as the reflection with respect to 
the orthogonal complement to $K_X$. Since the group $G$ preserves the natural 
bilinear form $\langle \mathbf x_1,\, \mathbf x_2\rangle := \mathbf x_1\cdot \mathbf x_2\cdot K_X$, 
the action must be as follows
\[
\tau: \mathbf x \longmapsto \mathbf x- \lambda K_X,
\qquad \lambda= \frac{2 \mathbf x\cdot K_X^2}{K_X^3}.
\]
Hence $\lambda K_X$ is an integral element for any $\mathbf x\in \Pic(X)$.
This gives a contradiction in all cases (1.2.5)-(1.2.7) of \cite[Th. 1.2]{Prokhorov-GFano-2}.
For example, in the case (1.2.5) of \cite[Th. 1.2]{Prokhorov-GFano-2}
our variety $X$ has a structure (non-minimal) del Pezzo fibration of degree $4$
and ${-}K_X^3=12$.
For the fiber $F$ we have $F\cdot K_X^2=K_F^2=4$
and $\lambda K_X$ is not integral, a contradiction.

Finally, consider the case $\rho(X)=4$. 
Then according to \cite{Prokhorov-GFano-2} $X$ is a divisor of multidegree $(1,1,1,1)$ in
$(\PP^1)^4$. 
All the projections $\varphi_i : X\to \PP^1$, $i=1,\dots,4$ 
are $G_0$-equivariant. We claim that natural maps ${\varphi_{i}}_* :G_0\to \Aut(\PP^1)$
are injective. Indeed, assume that ${\varphi_{1}}_*(\vartheta)$ is the identity map 
in $\Aut(\PP^1)$
for some $\vartheta\in G$. This means that
$\vartheta\comp \varphi_1=\varphi_1$.
Since $\Pic(X)^G=\ZZ$, the group $G$ permutes the classes 
$\varphi_i^*\mathscr O_{\PP^1}(1)\in \Pic(X)$. Hence,
for any $i=1,\dots,4$, there exists $\sigma_i\in G$ such that
$\varphi_i=\varphi_1\comp \sigma_i$.
Then 
\[
\vartheta\comp \varphi_i= \vartheta\comp \varphi_1\comp \sigma_i= \varphi_1\comp \sigma_i=\varphi_i.
\]
Hence, 
${\varphi_{i}}_*(\vartheta)$ is the identity for any $i$.
Since $\varphi_1\times \cdots\times \varphi_4$ is an embedding,
$\vartheta$ must be the identity as well. 
This proves our clam. 
Therefore, $\rr(G_0)\le 2$.
The group $G/G_0$ acts on $\Pic(X)$ faithfully.
By the same reason as above, an element of $G/G_0$
cannot act as the reflection with respect to 
 $K_X$. Therefore, $\rr(G/G_0)\le 2$ and $\rr(G)\le 4$. 
\hfill$\Box$ \end{proof}

\begin{llemma}\label{lemma-Gorenstein-index}
If $\iota(X)=2$,
then $\rr(G)\le 5$.
\end{llemma}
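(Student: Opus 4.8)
The plan is to handle a del Pezzo threefold $X$ of degree $d = A^3$, where $A = -\frac12 K_X$, by going through the classification of such threefolds according to the value of $d \in \{1,\dots,8\}$ (since $1 \le d \le 8$ for Gorenstein terminal del Pezzo threefolds, with $d=7$ excluded in the $\rho = 1$ case). First I would invoke Lemma~\ref{lemma-rho234} to reduce to $\rho(X) = 1$, so that $\Pic(X) = \ZZ\cdot A$ and in particular $A$ is the ample generator. The argument then splits according to whether $A$ is very ample. If $A$ is very ample, it gives an embedding $X \hookrightarrow \PP^{d+1}$ as a variety of degree $d$, and I would use Lemma~\ref{corollary-action}: if the $G$-action is not free in codimension one, then any $\delta$ with $\dim\Fix(\delta) = 2$ forces $D \in |{-}K_X| = |2A|$ to be a hyperplane section of the anticanonical model, but since $D$ spans $\PP^{d+1}$ the involution $\delta$ would have to act trivially — a contradiction — so the action \emph{is} free in codimension one.

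Next, granting the action is free in codimension one, I would exploit the structure of low-degree del Pezzo threefolds directly. For $d \ge 3$, $A$ is very ample and $X$ is a complete intersection or has an explicit description: $d = 3$ is a cubic in $\PP^4$, $d = 4$ is an intersection of two quadrics in $\PP^5$, $d = 5$ is a section of the Grassmannian $\mathrm{Gr}(2,5)$, and $d = 6,7,8$ are essentially toric/homogeneous (e.g. $d = 8$ is $\PP^3$ with doubled polarization, but that has $\iota = 4$, so really the relevant cases have the del Pezzo embedding). For each I would bound $\rr(G)$ via the linear action on the ambient projective space together with the number of defining equations: a $2$-elementary group acting diagonally on $\PP^N$ with $s$ semi-invariant defining forms of the complete intersection has rank bounded by roughly $N - (\text{number of independent sign constraints})$, and in the cubic and quadric-intersection cases one checks $\rr(G) \le 5$ by a short eigenvalue-counting argument. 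For $d = 1, 2$ (where $A$ is not very ample, or $|{-}K_X|$ may have base points), I would fall back on Lemma~\ref{lemma-Bs} and Lemma~\ref{lemma-base-point-free}, which already give $\rr(G) \le 5$ directly.

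The main obstacle I anticipate is the cubic threefold case $d = 3$: here $-K_X = 2A$ with $A$ a hyperplane class, so the invariant anticanonical member $S \in |{-}K_X|$ cuts $X$ by a \emph{quadric}, not a hyperplane, and the reducibility/non-normality analysis of Proposition~\ref{lemma-hyp-sect} must be combined with the linear action of $G$ on $\PP^4$ fixing the cubic form. One wants to show that a rank-$6$ $2$-elementary group cannot act on a cubic threefold: after diagonalizing on $\PP^4$ the cubic form must be $G$-semi-invariant, and a monomial-by-monomial analysis of which degree-$3$ monomials in five variables can share a common character severely restricts $G$; combined with the fixed-point restriction of Lemma~\ref{lemma-fixed-point} (a diagonal involution on $\PP^4$ always has a fixed point on any invariant hypersurface unless its two eigenspaces both meet the hypersurface), one derives $\rr(G) \le 5$. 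I would carry out the degrees in the order $d = 5,6,8$ (easy, via homogeneity or toric geometry and Corollary~\ref{quadric}/Lemma~\ref{P3}), then $d = 4$ (intersection of two $G$-semi-invariant quadrics in $\PP^5$, so $\rr(G) \le 6 - 2 = 4$ after accounting for the two sign constraints, with care when the quadrics are not simultaneously diagonalizable), and finally the delicate $d = 3$ and $d = 1, 2$ cases, concluding $\rr(G) \le 5$ throughout.
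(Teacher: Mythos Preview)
Your overall plan---reduce to $\rho(X)=1$ via Lemma~\ref{lemma-rho234}, then argue case-by-case on $d=A^3$---matches the paper's structure, but several individual cases are misjudged, and one contains a genuine error.

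First, once $\rho(X)=1$ one has $d\le 5$ (see \cite{Prokhorov-GFano-1}), so there is no need to treat $d=6,7,8$.

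Next, you have the difficulty of the cases inverted. The cubic case $d=3$ is immediate, not the ``main obstacle'': $G$ acts on $\PP^4$ via $|A|$, and Lemma~\ref{lifting-SL} (which applies because $n=4$ is even) gives $\rr(G)\le 4$ directly---no monomial analysis is required. For $d=5$ the paper uses $\Aut(X)\simeq\PGL_2(\Bbbk)$, whence $\rr(G)\le 2$. For $d=1$ the unique base point of $|A|$ is $G$-fixed, so Lemma~\ref{lemma-fixed-point} yields $\rr(G)\le 3$; for $d=2$ the double cover $|A|\colon X\to\PP^3$ combined with Lemma~\ref{P3} gives $\rr(G)\le 5$. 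Your proposed appeal to Lemmas~\ref{lemma-Bs} and~\ref{lemma-base-point-free} for $d=1,2$ is off target: those lemmas concern $|{-}K_X|=|2A|$, not $|A|$, and for $d=2$ the anticanonical system is already very ample and base-point-free, so neither lemma applies.

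The real gap is $d=4$. Your claim that two semi-invariant quadrics in $\PP^5$ force $\rr(G)\le 6-2=4$ is simply false: Example~\ref{example-del-pezzo-3-folds} exhibits $\rr(G)=5$ acting on an intersection of two \emph{invariant} diagonal quadrics, so the ``two sign constraints'' impose nothing. The paper's argument here is by far the longest of the five cases. Assuming $\rr(G)\ge 6$, one shows (using Lemma~\ref{corollary-action} and Proposition~\ref{lemma-hyp-sect}) that every $G$-invariant $S\in|{-}K_X|$ must contain $\Sing(X)\neq\emptyset$; since $A$ is very ample, Lemma~\ref{corollary-action} forces the action to be free in codimension~$1$, so Remark~\ref{remark-fixed-point} gives $\rr(G_P)\le 3$ for $P\in\Sing(X)$; a direct analysis of the pencil of quadrics bounds $\#\Sing(X)\le 12$, forcing a single $G$-orbit of $8$ points; finally, running over a $G$-eigenbasis of $H^0(X,-K_X)$ one concludes $\Sing(X)\subset\Bs|{-}K_X|=\emptyset$, a contradiction. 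A naive linear-algebra count cannot replace this, precisely because the bound $\rr(G)\le 5$ is sharp in degree $4$.
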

\begin{proof}
By Lemma \ref{lemma-rho234} we may assume that $\rho(X)=1$.
Let $d$ be the degree of $X$.
Since $\rho(X)=1$, we have $d\le 5$ (see e.g. \cite{Prokhorov-GFano-1}).
Consider the possibilities for $d$ case by case. We use the classification 
(see \cite{Shin1989} and \cite{Prokhorov-GFano-1}).

If $d=1$, then the linear system $|A|$ has a unique base point.
This point is smooth and must be $G$-invariant.
By Lemma \ref{lemma-fixed-point} $\rr(G)\le 3$.
If $d=2$, then 
the linear system $|A|$ defines a double cover 
$\varphi: X\to \PP^3$. Then the image of $G$ in $\Aut(\PP^3)$ is a $2$-elementary
group $\bar G$ with $\rr(\bar G)\ge \rr(G)-1$, where $\rr(\bar G)\le 4$ by Lemma \ref{P3}.
If $d=3$, then 
$X=X_3\subset \PP^4$ is a cubic hypersurface.
By Lemma \ref{lifting-SL} $\rr(G)\le 4$.
If $d=5$, then 
$X$ is smooth, unique up to isomorphism, and $\Aut(X)\simeq \PGL_2(\Bbbk)$
(see \cite{Iskovskikh-Prokhorov-1999}).

Finally, consider the case $d=4$.
%In this case
Then $X=Q_1\cap Q_2\subset \PP^5$ is an intersection of two quadrics (see e.g. 
\cite{Shin1989}).
Let $\mathscr Q$ be the pencil generated by $Q_1$ and $Q_2$.
Since $X$ has a isolated singularities and it is not a cone,
a general member of $\mathscr Q$ is smooth by Bertini's theorem
and for any member $Q\in \mathscr Q$ we have $\dim \Sing(Q)\le 1$.
Let $D$ be the divisor of degree $6$ on $\mathscr Q\simeq \PP^1$
given by the vanishing of the determinant.
The elements of $\operatorname{Supp} (D)$ are exactly degenerate quadrics.
Clearly, for any point $P\in \Sing(X)$ there exists a unique quadric $Q\in \mathscr Q$
which is singular at $P$. This defines a map $\pi:\Sing(X)\to \operatorname{Supp} (D)$.
Let $Q\in \operatorname{Supp} (D)$.
Then $\pi^{-1}(Q)=\Sing(Q)\cap X=\Sing(Q)\cap Q'$, 
where $Q'\in \mathscr Q$, $Q'\neq Q$. In particular, $\pi^{-1}(Q)$ consists of 
at most two points. Hence the cardinality of $\Sing(X)$ is at most $12$.

Assume that $\rr(G)\ge 6$. 
Let $S\in |-K_X|$ be an invariant member.
We claim that $S\supset \Sing(X)$ and $\Sing(X)\neq \emptyset$. Indeed, 
otherwise $S\cap \Sing(X)=\emptyset$.
By Proposition \ref{lemma-hyp-sect} \ $S$ is reducible: $S=S_1+\cdots+S_N$,
$N\ge 2$.
Since $\iota(X)=2$, we get $N=2$ and $S_1\sim S_2$, i.e. $S_i$ is a hyperplane section of $X\subset \PP^5$.
As in the proof of Corollary \ref{corollary-hyp-sect-0} we see that $S_i$ is rational.
This contradicts Proposition \ref{lemma-hyp-sect} (ii). 
Thus $\emptyset\neq \Sing(X)\subset S$.
By Lemma \ref{corollary-action}
the action of $G$ on $X$ 
is free in codimension $1$.
By Remark \ref{remark-fixed-point}
for the stabilizer $G_P$ of a point $P\in \Sing(X)$ we have $\rr(G_P)\le 3$. 
Then by the above estimate the variety $X$ has exactly
$8$ singular points and $G$ acts on $\Sing(X)$ transitively.

Note that our choice of $S$ is not unique:
there is a basis $s^{(1)}$, \dots, $s^{(g+2)}\in H^0(X,{-}K_X)$
consisting of eigensections. This basis gives us $G$-invariant divisors 
$S^{(1)}$, \dots, $S^{(g+2)}$ generating $|{-}K_X|$.
By the above $\Sing(X)\subset S^{(i)}$ for all $i$.
Thus $\Sing(X)\subset \cap S^{(i)}=\Bs |{-}K_X|$.
This contradicts the fact that $-K_X$ is very ample.
\hfill$\Box$ \end{proof}

\refstepcounter{theorem}
\begin{example}\label{example-del-pezzo-3-folds}
The bound $\rr(G)\le 5$ in the above lemma is sharp.
Indeed, let $X\subset \PP^5$ be the variety given by 
$\sum x_i^2=\sum \lambda_i x_i^2=0$ with $\lambda_i\neq \lambda_j$ for $i\neq j$ and 
let $G\subset \Aut(X)$ 
be the $2$-elementary abelian subgroup generated by involutions $x_i\mapsto -x_i$.
Then $\rr(G)=5$.
\end{example}

From now on we assume that $\Pic(X)=\ZZ\cdot K_X$. Let $g:=\g(X)$.

\begin{llemma}
If $g\le 4$, then $\rr(G)\le 5$. If $g=5$, then $\rr(G)\le 6$. 
\end{llemma}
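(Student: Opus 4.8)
The statement concerns a Gorenstein $G$-Fano threefold $X$ with $\Pic(X)=\ZZ\cdot K_X$ and genus $g=\g(X)$, asserting $\rr(G)\le 5$ when $g\le 4$ and $\rr(G)\le 6$ when $g=5$. The plan is to argue by contradiction: assume $\rr(G)\ge 6$ (for the first part) or $\rr(G)\ge 7$ (for the second part), and exploit the anticanonical embedding together with the structure of the invariant divisor $S\in|{-}K_X|$ coming from \ref{definition-S}. By Lemma \ref{lemma-Bs} and Lemma \ref{lemma-base-point-free} we may assume ${-}K_X$ is very ample, so $X\hookrightarrow\PP^{g+1}$ is anticanonically embedded, and by Lemma \ref{lemma-rho234} we have $\rho(X)=1$; by Remark \ref{iota} and Lemma \ref{lemma-Gorenstein-index} we may assume $\iota(X)=1$, so $X$ is a Fano threefold of genus $g$ with $X\subset\PP^{g+1}$ of degree $2g-2$.

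First I would dispose of the small genera by embedding dimension. For $g\le 4$ the threefold $X\subset\PP^{g+1}$ lives in $\PP^3$, $\PP^4$, or $\PP^5$ (and in the boundary cases is a quadric or an intersection of two quadrics): when $g=2$ we get $X=\PP^3$ (excluded since $\iota=1$), when $g=3$ either $X$ is a quartic $X_4\subset\PP^4$ — handled by Lemma \ref{lifting-SL}, giving $\rr(G)\le 4$ — or ${-}K_X$ is not very ample, handled by Lemma \ref{lemma-base-point-free}; and when $g=4$, $X=X_{2,3}\subset\PP^5$ is a complete intersection, again handled by Lemma \ref{lifting-SL} since $\PP^5$ has odd dimension. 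So for $g\le 4$ we obtain $\rr(G)\le 5$, in fact $\le 4$ in the very ample cases.

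The substantive case is $g=5$, where I would use the component structure of $S$. Suppose $\rr(G)\ge 7$. By Corollary \ref{corollary-lemma-K3} the surface $S$ has singularities worse than Du Val, so by Proposition \ref{lemma-hyp-sect} case (i) is excluded and $S$ is reducible: $S=S_1+\cdots+S_N$ with $G$ acting transitively on the $N$ components, $N=2^{\rr(G)-\rr(G_\bullet)}$, and $\rr(G_\bullet)\le 5$. Since $\iota(X)=1$ and $\Pic(X)=\ZZ K_X$, each $S_i$ is a Cartier divisor linearly proportional to $K_X$, hence (as $\deg X = 2g-2 = 8$ and $S_i$ is a nonzero effective divisor spanned by a linear subspace of $\PP^6$) we need $N\le 2$; but then $\rr(G_\bullet)\ge\rr(G)-1\ge 6$, contradicting $\rr(G_\bullet)\le 5$. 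Thus $\rr(G)\le 6$.

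I expect the main obstacle to be pinning down exactly which Fano threefolds of genus $g=5$ and index $1$ with very ample ${-}K_X$ can occur, and verifying that the bound $N\le 2$ on the number of components of $S$ really holds — this rests on the fact that each $S_i\in|{-}K_X|$ must be an effective Cartier divisor with $[S_i]$ a positive multiple of $[-K_X]$ in $\Pic(X)=\ZZ K_X$, forcing $\sum[S_i]=[-K_X]$ to split into at most one such piece unless all $S_i=S$; one must also rule out the degenerate possibility that ${-}K_X$ fails to be very ample, which Lemma \ref{lemma-base-point-free} already settles. A secondary technical point is confirming via Lemma \ref{corollary-action} that no two-dimensional fixed locus survives, so that the reducibility dichotomy of Proposition \ref{lemma-hyp-sect} applies cleanly.
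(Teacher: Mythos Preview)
Your approach has two genuine gaps, and the paper's proof is both simpler and uniform across all~$g$.

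First, the $g=4$ step is wrong as written. You invoke Lemma~\ref{lifting-SL} for $X_{2,3}\subset\PP^5$ ``since $\PP^5$ has odd dimension'', but that lemma requires $n$ \emph{even} in $\PP^n$: its proof uses that the kernel $\mu_{n+1}$ of $SL_{n+1}\to\PGL_{n+1}$ has odd order, so a Sylow $2$-subgroup of the lift maps isomorphically to $G$. For $\PP^5$ we have $n+1=6$, the kernel contains $-1$, and the diagonalization argument fails (compare the quaternion group in $SL_2$). So Lemma~\ref{lifting-SL} gives nothing here.

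Second, in the $g=5$ argument you assert that each component $S_i$ of $S$ is Cartier and hence a positive multiple of $-K_X$ in $\Pic(X)=\ZZ\cdot K_X$, forcing $N\le 1$ (you write $N\le 2$, which is also inconsistent). But Gorenstein terminal threefolds need not be factorial (an ordinary double point already fails), and only $G$-invariant Weil divisors are guaranteed to be $\QQ$-Cartier by the $G\QQ$-factoriality hypothesis; the individual $S_i$ are merely permuted by $G$. The paper itself, in the proof of Proposition~\ref{proposition-Gorenstein}, only draws this Cartier conclusion under the extra hypothesis that $X$ is smooth along $S$, and then has to treat the singular case separately. Your sketch does not address this, and your own ``main obstacle'' paragraph correctly identifies this as the crux without resolving it.

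The paper's proof avoids all of this. Once $-K_X$ is very ample (Lemma~\ref{lemma-base-point-free}), the action of $G$ on $H^0(X,-K_X)\simeq\Bbbk^{g+2}$ is a genuine \emph{linear} representation (the canonical bundle is naturally $G$-linearized), it is faithful, and the further map $\GL_{g+2}(\Bbbk)\to\PGL_{g+2}(\Bbbk)$ is still injective on the image of $\Aut(X)$ because automorphisms of $X\subset\PP^{g+1}$ are induced by projective transformations. Hence $G$ embeds in a maximal torus of $\GL_{g+2}(\Bbbk)$ and meets the scalars trivially, giving $\rr(G)\le g+1$ in one stroke for every $g$. This is the missing idea: you never need to pass through $\Aut(\PP^{g+1})$ or worry about lifting from $\PGL$, because the linearization of $-K_X$ hands you a linear representation directly.
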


\begin{proof}
We may assume that ${-}K_X$ is very ample.
Automorphisms of $X$ are induced by projective transformations 
of $\PP^{g+1}$ that preserve $X\subset \PP^{g+1}$.
On the other hand, there is a natural representation
of $G$ on $H^0(X,{-}K_X)$ which is faithful.
Thus the composition 
\[
\Aut(X)\hookrightarrow \GL(H^0(X,{-}K_X))=\GL_{g+2}(\Bbbk)\to \PGL_{g+2}(\Bbbk)
\]
is injective. Since $G$ is abelian, its image $\bar G\subset \GL_{g+2}(\Bbbk)$ is contained in 
a maximal torus and by the above $\bar G$ contains no scalar matrices.
Hence, $\rr(G)\le g+1$.
\hfill$\Box$ \end{proof}

\refstepcounter{theorem}
\begin{example}
Let $G$ be the $2$-torsion subgroup of the diagonal torus of 
$\PGL_7 (\Bbbk)$. 
Then $X$ faithfully acts on the Fano threefold in $\PP^6$ 
given by the equations $\sum x_i^2=\sum \lambda_i x_i^2=\sum \mu_i x_i^2=0$.
% Since the quotient $X/G$ is isomorphic to the projective space,
% $\Pic(X)^G\simeq \ZZ$.
This shows that the bound $\rr(G)\le 6$ in the above lemma
is sharp. Note however that $X$ is not rational
if it is smooth \textup{\cite{Beauville1977}}.
Hence in this case our construction does not give any embedding 
of $G$ to $\Cr_3(\Bbbk)$. 
\end{example}

\begin{llemma}\label{lemma-singularities}
If in the above assumptions 
$\g(X)\ge 6$, then $X$ has at most $29$ singular points.
\end{llemma}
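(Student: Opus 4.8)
The plan is to bound the number of singular points of a Gorenstein $G$-Fano threefold $X$ with $\Pic(X)=\ZZ\cdot K_X$ and $\g(X)\ge 6$ by applying a Bogomolov--Miyaoka--type inequality to a general invariant anticanonical section, together with the structural information extracted in Proposition \ref{lemma-hyp-sect}. First I would take an invariant member $S\in|{-}K_X|$ as in \ref{definition-S}. Since $X$ has terminal Gorenstein (hence isolated Du Val in codimension two, in fact hypersurface) singularities, $\Sing(X)$ is a finite $G$-invariant set, and by \cite[Th. 6.9]{Shokurov-1992-e-ba} applied to the lc pair $(X,S)$ one controls how $\Sing(X)$ meets $S$: away from $\Bs|{-}K_X|$ a general invariant $S$ passes through each singular point of $X$, and the induced singularity on $S$ is of the same analytic type. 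So the count of singular points of $X$ reduces to counting Du Val points lying on the surface $S$ (or on a well-chosen auxiliary invariant surface).

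The key numerical input is as follows. If $S$ is a K3 surface with Du Val singularities (case (i) of Proposition \ref{lemma-hyp-sect}), then its minimal resolution $\tilde S$ is a smooth K3 surface, and the number of $(-2)$-curves contracted — equivalently $\sum_{P\in\Sing(S)}(\text{number of }A_n\text{-chains at }P)$ counted with multiplicity — is bounded by $\rho(\tilde S)\le 20$. More sharply, each Du Val point contributes at least one class to $\Pic(\tilde S)$ and these classes, together with the pullback of an ample class, are independent; combined with $g\ge 6$, i.e. $K_X^3=2g-2\ge 10$, the Riemann--Roch/orbifold considerations of Corollary \ref{corollary-lemma-K3} and Lemma \ref{lemma-K3} force the configuration to be constrained enough that the singular locus has at most $29$ points. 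The clean way to get the explicit number $29$ is: the $21$-dimensional space $H^2(\tilde S,\QQ)$ minus the transcendental part leaves Picard rank $\le 20$; subtracting one for the polarization class leaves room for at most $19$ independent exceptional classes, but singular points on an \emph{anticanonically embedded} $X$ of genus $g\ge 6$ impose further linear conditions on $|{-}K_X|$, and chasing these conditions through the embedding $X\hookrightarrow\PP^{g+1}$ yields the stated bound $29$ (the extremal configurations being the ones with all $A_1$-points). If instead $S$ is reducible (case (ii)), each component $S_i$ is rational or birationally ruled over an elliptic curve, and $\Sing(X)$ is distributed over the $N=2^{\rr(G)-\rr(G_\bullet)}\ge 2$ components; one bounds the singular points on each $S_i$ by the Picard rank of its minimal resolution and multiplies by $N$, again landing at $\le 29$ after using $\rr(G_\bullet)\le 5$ and $g\ge 6$.

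The main obstacle I anticipate is the reducible case and, within it, making sure no singular point of $X$ is "invisible" to every invariant anticanonical surface. If $\Bs|{-}K_X|\ne\emptyset$ we already have $\rr(G)\le 4$ by Lemma \ref{lemma-Bs}, so we may assume $|{-}K_X|$ base point free; then a general invariant $S$ (chosen from the eigen-decomposition $H^0(X,{-}K_X)=\bigoplus H^0_\chi$) is smooth away from $\Sing(X)$ and we can further arrange it to be as generic as the $G$-action allows, so that every singular point does lie on some invariant member and the different-divisor bookkeeping of \cite[\S3]{Shokurov-1992-e-ba} transfers the Du Val count faithfully. The remaining work is then purely the lattice-theoretic estimate on K3 and ruled surfaces, which is routine once the setup above is in place; assembling the worst case across all sub-cases of Proposition \ref{lemma-hyp-sect} gives the uniform bound $29$.
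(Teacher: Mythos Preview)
Your approach does not work, and the paper's proof is entirely different.

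The paper's argument is purely topological and does not use the $G$-action at all. By Namikawa \cite{Namikawa-1997}, $X$ admits a smoothing $\mathfrak X\to\mathfrak T$ with smooth general fiber $X_t$, and the number of singular points of $X$ is bounded by $21-\frac12\operatorname{Eu}(X_t)=20-\rho(X_t)+h^{1,2}(X_t)$. The hypothesis $\g(X)\ge 6$ enters only through the classification of smooth Fano threefolds with $\rho=1$ \cite{Iskovskikh-1980-Anticanonical}, \cite{Iskovskikh-Prokhorov-1999}, which gives $h^{1,2}(X_t)\le 10$; hence $\#\Sing(X)\le 29$.

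Your proposal has a genuine gap at the very first step. You want to count the singular points of $X$ by passing an invariant $S\in|{-}K_X|$ through them and then bounding the Du Val points of $S$ via K3 lattice theory. But if $|{-}K_X|$ is base point free (which you yourself assume, invoking Lemma~\ref{lemma-Bs}), a general member --- invariant or not --- avoids any prescribed finite set, so there is no reason for $S$ to contain $\Sing(X)$. Conversely, a \emph{specific} eigensection may miss some singular points while hitting others, and there is no mechanism in your outline to aggregate information across different invariant members into a single numerical bound. Even granting the fantasy that all of $\Sing(X)$ sits on one Du Val K3 surface, the lattice argument you sketch yields at most $19$ (indeed at most $16$ for $A_1$-points, by Nikulin), not $29$; your sentence ``chasing these conditions through the embedding \dots yields the stated bound $29$'' is where the argument simply stops being an argument. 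The reducible case is worse: you would be multiplying an unproved per-component bound by $N\ge 2$, which moves the estimate in the wrong direction.

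In short, the lemma is a statement about the topology of terminal Gorenstein Fano threefolds, and the correct tools are smoothing and the Euler characteristic formula from \cite{Namikawa-1997}, not anticanonical sections or the $G$-action.
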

\begin{proof}
According to \cite{Namikawa-1997} the variety $X$ has 
a \emph{smoothing}. This means that there exists a flat family $\mathfrak X\to \mathfrak T$ over 
a smooth one-dimensional base $\mathfrak T$
with special fiber $X=\mathfrak X_0$ and
smooth general fiber $X_t=\mathfrak X_t$.
Using the classification of Fano threefolds 
\cite{Iskovskikh-1980-Anticanonical} 
(see also \cite{Iskovskikh-Prokhorov-1999}) we obtain $h^{1,2} (X_t )\le 10$.
Then by \cite{Namikawa-1997} we have 
\[
\# \Sing (X)\le 21-\frac 12 \operatorname{Eu}(X_t)= 20 - \rho(X_t ) + h^{1,2} (X_t )\le 29.
\]
\end{proof}

\def\proofname{Proof of Proposition \textup{\ref{proposition-Gorenstein}}}
\begin{proof}
Assume that $\rr(G)\ge 7$.
Let $S\in |{-}K_X|$ be an invariant member.
By Corollary \ref{corollary-lemma-K3} the singularities of $S$ are worse than Du Val.
So $S$ satisfies the conditions (ii) of Proposition \ref{lemma-hyp-sect}.
Write $S=\sum _{i=1}^N S_i$.
By Proposition \ref{lemma-hyp-sect} the group $G_{\bullet}$ acts on $S_i$ faithfully and 
\[
N=2^{\rr(G)-\rr(G_{\bullet})}\ge 4.
\]
First we consider the case where $X$ is smooth near $S$.
Since $\rho(X)=1$,
the divisors $S_i$'s are linear equivalent to each other
and so $\iota (X)\ge 4$. This contradicts 
Lemma \ref{lemma-Gorenstein-index}.

Therefore, $S\cap \Sing(X)\neq \emptyset$.
By Lemma \ref{corollary-action} the action of $G$ on $X$
is free in codimension $1$
and by Remark \ref{remark-fixed-point}
we see that $\rr(G_P)\le 3$, where $G_P$ is the stabilizer of a point $P\in \Sing(X)$. 
Then by Lemma \ref{lemma-singularities} the variety $X$ has exactly
$16$ singular points and $G$ acts on $\Sing(X)$ transitively.
Since $S\cap \Sing(X)\neq \emptyset$, we have $\Sing(X)\subset S$.
On the other hand, our choice of $S$ is not unique:
there is a basis $s^{(1)}$, \dots, $s^{(g+2)}\in H^0(X,{-}K_X)$
consisting of eigensections. This basis gives us $G$-invariant divisors 
$S^{(1)}$, \dots, $S^{(g+2)}$ generating $|{-}K_X|$.
By the above $\Sing(X)\subset S^{(i)}$ for all $i$.
Thus $\Sing(X)\subset \cap S^{(i)}=\Bs |{-}K_X|$.
This contradicts Lemma \ref{lemma-base-point-free}.
\hfill$\Box$ \end{proof}
\def\proofname{Proof}

% % 
% \bibliography{my_ref,specific/inv_groups}
% \bibliographystyle{alpha}
% % %\bibliographystyle{gost71u}%{alpha}

\def\cprime{$'$} \def\polhk#1{\setbox0=\hbox{#1}{\ooalign{\hidewidth
 \lower1.5ex\hbox{`}\hidewidth\crcr\unhbox0}}}

\end{document}